\documentclass[10pt]{amsart}

\usepackage{times,amsmath,amsbsy,amssymb,amscd,mathrsfs}
\usepackage{graphicx,subcaption,epstopdf,wrapfig,chemarrow}

\usepackage{algorithm2e} 
\usepackage{multicol,multirow}
\usepackage{mathtools}
\usepackage[usenames,dvipsnames,svgnames,table]{xcolor}
\usepackage[all]{xy}
\usepackage{wrapfig}
\usepackage{tcolorbox}

\usepackage{tikz,tikz-cd}
\usepackage[utf8]{inputenc}
\usepackage{pgfplots} 
\usepackage{pgfgantt}
\usepackage{pdflscape}
\pgfplotsset{compat=newest} 
\pgfplotsset{plot coordinates/math parser=false}
\newlength\fwidth

\definecolor{myBlue}{rgb}{0.0,0.0,0.55}
\definecolor{green}{rgb}{0.0,0.7,0.2}
\usepackage[pdftex,colorlinks=true,citecolor=myBlue,linkcolor=myBlue]{hyperref}

\usepackage[hyperpageref]{backref}

\usepackage{comment,enumerate,multicol,xspace}

  \newcounter{mnote}
  \let\oldmarginpar\marginpar
    \renewcommand\marginpar[1]{\-\oldmarginpar[\raggedleft\footnotesize #1]%
    {\raggedright\footnotesize #1}}

\newtheorem{theorem}{Theorem}[section]
\newtheorem{lemma}[theorem]{Lemma}
\newtheorem{corollary}[theorem]{Corollary}

\newtheorem{remark}[theorem]{Remark}

\newcommand{\dd}{\,{\rm d}}

\newcommand{\bs}{\boldsymbol}

\newcommand{\curl}{{\rm curl\,}}
\renewcommand{\div}{\operatorname{div}}

\newcommand{\sym}{\operatorname{sym}}

\newcommand{\vertiii}[1]{{\left\vert\kern-0.25ex\left\vert\kern-0.25ex\left\vert #1 
    \right\vert\kern-0.25ex\right\vert\kern-0.25ex\right\vert}}

\newcommand{\Oplus}{\ensuremath{\vcenter{\hbox{\scalebox{1.5}{$\oplus$}}}}}

\newcommand{\graysquare}{\tikz \fill[gray!70] (0,0) rectangle (0.18,0.18);}

\usepackage{booktabs}

\begin{document}
\title[Planar Elasticity Complexes and $C^1$ Elements]{Explicit Planar Finite Element Elasticity Complexes and $C^1$ Elements on Barycentric Refinements}

\author{Chunyu Chen}
\address{School of Mathematical Sciences, Peking University, Beijing 100871, China}
\email{cbtxs@math.pku.edu.cn}

\author {Long Chen}
\address{Department of Mathematics, University of California at Irvine, Irvine, CA 92697 USA }
\email{chenlong@math.uci.edu}

\author {Xuehai Huang}
\address{School of Mathematics, Shanghai University of Finance and Economics, Shanghai 200433, China}
\email{huang.xuehai@sufe.edu.cn}

\thanks{The first author was supported by the China Postdoctoral Science Foundation (Grant No. 2025M783064). The second author was supported by NSF DMS-2309785.}

\makeatletter
\@namedef{subjclassname@2020}{\textup{2020} Mathematics Subject Classification}
\makeatother
\subjclass[2020]{
65N30;   
58J10;   
15A69;   
}

\maketitle

\begin{abstract}
The exact-sequence structure behind the Arnold--Douglas--Gupta family of higher-order mixed finite elements for plane elasticity on barycentric refinements is made explicit. On each macro triangle, the symmetric stress space is obtained by enriching polynomial stresses with three locally supported functions. We derive closed-form formulas for these enrichments and identify explicit Airy potentials that generate them. This leads to a concrete Hsieh--Clough--Tocher type $C^1$ potential space whose Airy image is exactly the Arnold--Douglas--Gupta stress space. By enforcing single-valued degrees of freedom, we obtain global spaces and a fully explicit finite element elasticity complex on simply connected domains. As a consequence, we construct a new family of $C^1$ finite elements on barycentric refinements, including quadratic, cubic, quartic, and higher-order elements.
\end{abstract}

\section{Introduction}
Mixed finite element methods for linear elasticity approximate the stress $\boldsymbol{\sigma}\in H(\div,\Omega;\mathbb S)$ and the displacement $\boldsymbol{u}\in L^2(\Omega;\mathbb{R}^2)$ in the Hellinger--Reissner formulation, where the symmetry constraint $\boldsymbol{\sigma}=\boldsymbol{\sigma}^{\intercal}$ is imposed directly. The macroelement construction of Arnold--Douglas--Gupta (ADG)~\cite{ArnoldDouglasGupta1984} gives a family of symmetric $H(\div)$-conforming stress spaces $\Sigma_{k,h}^{\rm ADG}$ on the barycentric refinement of triangles. On each macro triangle $T$, split into three subtriangles, the stress space is $\mathbb{P}_k(T;\mathbb{S})$ enriched by three basis functions. This is the smallest enrichment that makes $\div\boldsymbol{\sigma}$ a polynomial on the macro triangle, rather than only a piecewise polynomial on the subtriangles. These spaces give optimal-order error estimates that remain uniform in the incompressible limit.

The Arnold--Douglas--Gupta element fits naturally into the two-dimensional elasticity complex
\begin{equation}\label{eq:2Dcomplex}
\mathbb P_1 \hookrightarrow H^2(\Omega) \xrightarrow{J} H(\div,\Omega;\mathbb S) \xrightarrow{\div} L^2(\Omega;\mathbb R^2)\to 0,
\end{equation}
where $J$ is the Airy operator, defined in \eqref{eq:Jdef}. Discrete versions of this complex can be constructed by finite element exterior calculus (FEEC) \cite{Arnold2018,ArnoldFalkWinther2006} or by the Bernstein--Gelfand--Gelfand (BGG) framework~\cite{Arnold;Hu:2020Complexes,ChenHuang2022Complexes,Chen;Huang:2022femcomplex2d}. These are now standard tools in the design of elasticity elements. Recent work includes low-order elasticity complexes on barycentric refinements~\cite{ChristiansenHu2023}, discrete elasticity complexes on barycentric splits in three dimensions~\cite{ChristiansenGopalakrishnanGuzmanHu2024}, and finite element elasticity complexes on non-refined triangulations~\cite{Chen;Huang:2021Finite,ChenHuang2022Complexes,Chen;Huang:2022femcomplex2d}.

The main goal of this paper is to make the two-dimensional finite element elasticity complex for the Arnold--Douglas--Gupta macroelement explicit. In~\cite{ArnoldDouglasGupta1984}, the enrichment basis functions were generated by computer subroutines. Here we derive explicit formulas for the three local enrichment stresses $\bs\psi_0,\bs\psi_1,\bs\psi_2$, chosen so that $\div\bs\psi_i=0$ for $i=0,1,2$. We then construct explicit Airy potentials for these functions: for each $i=0,1,2$, we build $v_i\in C^1(T)$ such that $J(v_i)=\bs\psi_i$. These potentials define the enriched space
$$
U_{k+2}(T)=\mathbb{P}_{k+2}(T)\oplus {\rm span}\{v_0,v_1,v_2\}, \qquad k\ge 2.
$$
By enforcing the degrees of freedom \eqref{HCTfeDoF} to be single-valued across interelement edges, we obtain a global $C^1(\Omega)$ space $U_{k+2,h}$ and the explicit discrete elasticity complex
\begin{equation}
\label{intro:femelasticitycomplexHCT}
\mathbb P_1\hookrightarrow U_{k+2,h} \xrightarrow{J} \Sigma_{k,h}^{\rm ADG}
\xrightarrow{\div} V_{k-1,h}\to 0,\qquad k\geq 2,
\end{equation}
where $V_{k-1,h}$ is the discontinuous polynomial space of degree $k-1$. The lowest-order case, $U_4$--$\Sigma_2$--$P_1$, is shown in Fig.~\ref{fig:lowest}.

In this paper we focus on the case $k\ge 2$. When $k=1$, the stress element on the barycentric refinement reduces to the Johnson--Mercier element~\cite{JohnsonMercier1978}, and the corresponding $U_{3,h}$ space is the cubic Hsieh--Clough--Tocher element~\cite{Hsieh1962,CloughTocher1965,ArnoldWinthe2003finite}.

\begin{figure}[htbp]
\begin{center}
\includegraphics[width=4in]{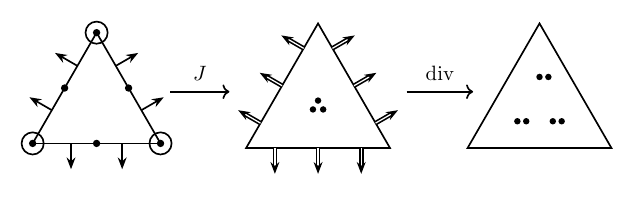}
\caption{The finite element elasticity complex in the lowest-order case $U_4$--$\Sigma_2$--$P_1$.}
\label{fig:lowest}
\end{center}
\end{figure}

The Arnold--Winther element~\cite{ArnoldWinther2002} and the Hu--Zhang element~\cite{HuZhang2014,Hu2015a} also admit discrete elasticity complexes~\cite{ChenHuHuangMan2018,christiansenNodalFiniteElement2018}: for $k\ge 3$,
\begin{equation*}
\mathbb P_1\hookrightarrow U_{k+2,h}^{\rm Arg} \xrightarrow{J} \Sigma_{k,h}^{\rm AW}\ \text{or}\ \Sigma_{k,h}^{\rm HZ}
\xrightarrow{\div} V_{k-1,h}\to 0,
\end{equation*}
where $U_{k+2,h}^{\rm Arg}$ is an Argyris-type $C^1$ space. A main advantage of the present construction is that the $C^1$ potential space does not require $C^2$ continuity at vertices. This weaker vertex continuity reduces the smoothness requirements in the Hu--Zhang and Arnold--Winther constructions and also allows a hybridizable formulation.

\begin{figure}[htbp]
\begin{center}
\includegraphics[width=3.8in]{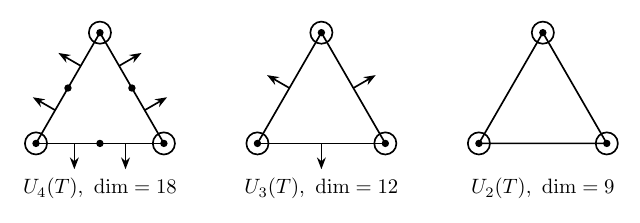}
\caption{Degrees of freedom for the $C^1$ elements $U_4(T)$, $U_3(T)$, and $U_2(T)$, with dimensions $18$, $12$, and $9$, respectively.}
\label{fig:C1low}
\end{center}
\end{figure}

Another contribution of this paper is the construction of explicit bases for composite $C^1$ finite elements on barycentric refinements. The cubic space $U_3(T)$ coincides with the Hsieh--Clough--Tocher element \cite{Hsieh1962,CloughTocher1965}. For higher degree, the spaces $U_{k+2}(T)$ provide a simpler alternative to the Douglas--Dupont--Percell--Scott family of composite $C^1$ elements~\cite{percell1976cubic,DouglaDupontPercelScott1979family}. Unlike the standard higher-degree composite $C^1$ constructions, $U_{k+2}(T)$ is obtained by enriching $\mathbb P_{k+2}(T)$ with only three $C^1$ shape functions. Its local dimension is therefore much smaller, and the basis functions are fully explicit.

The space $U_4(T)$ can be viewed as a conforming analogue of the $H^2$-nonconforming quartic element in~\cite{nilssen2001robust}. The space $U_3(T)=\mathbb P_3(T)\oplus \operatorname{span}\{v_0,v_1\}$ has a natural interpretation as a hierarchical basis for the Hsieh--Clough--Tocher element~\cite{Hsieh1962,CloughTocher1965}, and $U_2(T)$ is a subspace containing $\mathbb P_2(T)$. See Fig.~\ref{fig:C1low} for the degrees of freedom of $U_2(T)$, $U_3(T)$, and $U_4(T)$.

The same degrees of freedom can also be used to define a $C^1$ virtual element space~\cite{ChenHuangWei2022,BrezziMarini2013,BeiraodaVeigaManzini2014}. Because of the barycentric refinement, the shape functions here are explicit, and no stabilization is needed for the discretization.

The rest of the paper is organized as follows. In Section~\ref{sec:symstress} we construct the finite element spaces for the symmetric stress space. In Section~\ref{sec:C1fe}, we define the associated scalar potential space and construct $C^1$ elements. In Section~\ref{sec:feelascomplex} we introduce the resulting discrete elasticity complexes and prove their exactness on simply connected domains.

\section{Symmetric Finite Element Stress Spaces on Barycentric Refinement}\label{sec:symstress}
In this section, we construct the symmetric stress spaces on barycentric refinements. In contrast to the Arnold--Douglas--Gupta construction, we give explicit formulas for the local enrichment functions in barycentric coordinates.

\subsection{Notation}
Let $\mathcal{T}_h$ be a conforming triangulation of a polygonal domain $\Omega \subset \mathbb{R}^2$. For each $T \in \mathcal{T}_h$, let $T^{\rm R}$ be its barycentric refinement, obtained by connecting the barycenter $\texttt{v}_c$ of $T$ to its three vertices $\texttt{v}_0$, $\texttt{v}_1$, and $\texttt{v}_2$, where $\texttt{v}_c=(\texttt{v}_0+\texttt{v}_1+\texttt{v}_2)/3$. This splits $T$ into three subtriangles $T_0$, $T_1$, and $T_2$, where $T_i$ is opposite to $\texttt{v}_i$ for $i=0,1,2$. For a subtriangle $T_i \subset T^{\rm R}$, let $\chi_{T_i}$ be its characteristic function. We denote the edge vector by $\boldsymbol{t}_{i,j}=\texttt{v}_j-\texttt{v}_i$ for $i,j=0,1,2,c$. See Fig.~\ref{fig:2Dsplit}.

Let $\lambda_i$ be the barycentric coordinate on $T$ associated with the vertex $\texttt{v}_i$, and let $\lambda_i^{\rm R}$ be the corresponding barycentric coordinate on the refined mesh $T^{\rm R}$, which is piecewise linear on $T^{\rm R}$. By checking the values at the vertices, we have
\begin{equation}\label{eq:lambdaRprop1}
\lambda_i^{\rm R}|_{T_j}=(\lambda_i-\lambda_j)|_{T_j},\quad i,j=0,1,2.
\end{equation}
Consequently,
\begin{equation}\label{eq:lambdaRprop2}
(\lambda_2-\lambda_1)|_{T_1\cup T_2} = \lambda_2^{\rm R}\chi_{T_1}-\lambda_1^{\rm R}\chi_{T_2}, \quad \nabla\lambda_0^{\rm R}|_{T_1}=\nabla\lambda_0-\nabla\lambda_1.
\end{equation}

We use $\Delta_{\ell}(T)$ to denote the set of subsimplices of dimension $\ell$, for $0 \le \ell \le 2$. We also identify a subsimplex $e$ with a subset of $\{0,1,2\}$ and write $e^*$ for its complement. We label the edges of $T$ by $e_i$, where $e_i$ is the edge opposite to the vertex $\texttt{v}_i$, that is, $e_i=\{i\}^*$ for $i=0,1,2$.

Throughout, $|T|$ denotes the area of $T$. For an edge $e$, $\boldsymbol{t}_e$ and $\boldsymbol{n}_e$ denote the unit tangential and normal vectors, oriented according to the fixed ordering of the vertices. Let $b_T=\lambda_0\lambda_1\lambda_2$ be the element bubble and let $b_e=\lambda_i\lambda_j$ be the edge bubble for $e=\{i,j\}$.

The space $\mathbb{S}$ denotes the set of $2 \times 2$ symmetric matrices, and the divergence operator $\div$ is applied row-wise to tensor fields.

\subsection{Stress element}
For $k\ge 1$, we define the local symmetric stress space by
\begin{equation*}
\Sigma_{k, \psi}^{\operatorname{div}}(T; \mathbb{S})
=
\mathbb{P}_k(T; \mathbb{S})
+
\operatorname{span}\{\boldsymbol{\psi}^k_{0},\boldsymbol{\psi}^k_{1},\boldsymbol{\psi}^k_{2}\},
\end{equation*}
where
\begin{equation}\label{eq:2d}
\begin{aligned}
\boldsymbol{\psi}^k_{0}
&=
\Bigl[
2(\lambda_0^{\rm R})^{k}\,\sym(\boldsymbol{t}_{c,0}\otimes \boldsymbol{t}_{c,1})
- k(\lambda_0^{\rm R})^{k-1}\lambda_1^{\rm R}\,\boldsymbol{t}_{c,1}\otimes \boldsymbol{t}_{c,1}
\Bigr]\chi_{T_2}
\\&\quad
+
\Bigl[
-2(\lambda_0^{\rm R})^{k}\,\sym(\boldsymbol{t}_{c,0}\otimes \boldsymbol{t}_{c,2})
+ k(\lambda_0^{\rm R})^{k-1}\lambda_2^{\rm R}\,\boldsymbol{t}_{c,2}\otimes \boldsymbol{t}_{c,2}
\Bigr]\chi_{T_1},
\end{aligned}
\end{equation}
and $\boldsymbol{\psi}^k_{1}$ and $\boldsymbol{\psi}^k_{2}$ are obtained from $\boldsymbol{\psi}^k_0$ by cyclic permutation of the indices $(0,1,2)$. By direct calculation, $\bs\psi_i^k\in H(\div, T; \mathbb S)$ and $\div\bs\psi_i^k=0$ for $i=0,1,2$. We will verify this in Section~\ref{sec:localenrichment} by constructing explicit potential functions, and turn to the degrees of freedom and unisolvence.

On an edge $e\in \Delta_1(T)$, we use the $t$-$n$ decomposition of $\mathbb S$:
\begin{equation*}
\begin{aligned}
\mathscr T^e(\mathbb S) &= \operatorname{span}\{\boldsymbol{t}_e\otimes \boldsymbol{t}_e\}, \\
\mathscr N^e(\mathbb S) &= \operatorname{span}\{\boldsymbol{n}_e\otimes \boldsymbol{n}_e,\ \sym(\boldsymbol{n}_e\otimes \boldsymbol{t}_e)\}.
\end{aligned}
\end{equation*}
These are the tangential and normal parts along $e$.

We next introduce the $\div$-bubble polynomial space
\begin{equation*}
\mathbb B_k^{\operatorname{div}}(T; \mathbb{S})
=
\mathbb P_k(T; \mathbb{S})\cap \ker(\operatorname{tr}^{\operatorname{div}})
=
\mathbb P_k(T; \mathbb{S})\cap H_0(\div, T;\mathbb S),
\end{equation*}
which admits the geometric characterization \cite{ChenHuang2024,ChenHuang2025}
\begin{equation*}
\mathbb B_k^{\operatorname{div}}(T; \mathbb{S})
=
b_T \mathbb{P}_{k-3}(T; \mathbb{S})
\;\oplus\;
\Oplus_{e\in\Delta_1(T)} b_e \mathbb{P}_{k-2}(e; \mathscr T^e(\mathbb S)).
\end{equation*}
Another useful characterization, due to Hu and Zhang~\cite{HuZhang2014,Hu2015a}, is
\begin{equation*}
\mathbb B_k^{\operatorname{div}}(T; \mathbb{S})
=
\mathbb P_{k-2}(T)\otimes
\operatorname{span} \{ b_e \boldsymbol t_e\otimes \boldsymbol t_e : e\in \Delta_1(T)\}.
\end{equation*}
Therefore, when defining degrees of freedom, it is enough to work with $\mathbb P_{k-2}(T;\mathbb S)$.

We also use the following geometric decomposition of $\mathbb P_k(T;\mathbb S)$ \cite{ChenHuang2024,ChenHuang2025}:
\begin{equation}\label{eq:dec}
\mathbb P_k(T;\mathbb S)
=
\mathbb B_k^{\operatorname{div}}(T; \mathbb{S})
\;\oplus\;
\mathbb P_1(T;\mathbb S)
\;\oplus\;
\Oplus_{e\in\Delta_1(T)} b_e \mathbb{P}_{k-2}(e; \mathscr N^e(\mathbb S)).
\end{equation}

\begin{lemma}\label{lem:geodecompSigmak}
For $k\geq 1$, the enrichment is direct:
\begin{equation*}
\Sigma_{k, \psi}^{\operatorname{div}}(T; \mathbb{S})
=
\mathbb{P}_k(T; \mathbb{S})
\;\oplus\;
\operatorname{span}\{\boldsymbol{\psi}^k_{0},\boldsymbol{\psi}^k_{1},\boldsymbol{\psi}^k_{2}\}.
\end{equation*}
Moreover,
\begin{equation}\label{geodecomp:Sigmak}
\Sigma_{k, \psi}^{\operatorname{div}}(T; \mathbb{S})
=
\mathbb B_k^{\operatorname{div}}(T; \mathbb{S})
\;\oplus\;
\widetilde{\Sigma}_{1, \psi}^{\operatorname{div}}(T; \mathbb{S})
\;\oplus\;
\Oplus_{e\in\Delta_1(T)} b_e \mathbb{P}_{k-2}(e; \mathscr N^e(\mathbb S)),
\end{equation}
where
\begin{equation*}
\widetilde{\Sigma}_{1, \psi}^{\operatorname{div}}(T; \mathbb{S})
=
\mathbb{P}_1(T; \mathbb{S})
\;\oplus\;
\operatorname{span}\{\boldsymbol{\psi}^k_{0},\boldsymbol{\psi}^k_{1},\boldsymbol{\psi}^k_{2}\}.
\end{equation*}
\end{lemma}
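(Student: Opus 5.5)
The first claim reduces to showing that no nontrivial combination of $\bs\psi_0^k,\bs\psi_1^k,\bs\psi_2^k$ is a polynomial on $T$. Once this is established, the decomposition \eqref{geodecomp:Sigmak} follows by substituting \eqref{eq:dec} for $\mathbb P_k(T;\mathbb S)$ and regrouping $\mathbb P_1(T;\mathbb S)$ with the span of the $\bs\psi_i^k$. That regrouping is legitimate because a direct sum stays direct under reassociation: if $\mathbb P_k=A\oplus\mathbb P_1\oplus B$ and $\mathbb P_k\cap{\rm span}\{\bs\psi_i^k\}=0$, then $A\oplus(\mathbb P_1\oplus{\rm span}\{\bs\psi_i^k\})\oplus B$ is direct. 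The dimension count also matches. So everything hinges on the following claim: if $\boldsymbol p=\sum_i c_i\bs\psi_i^k$ with $\boldsymbol p\in\mathbb P_k(T;\mathbb S)$, then $c_0=c_1=c_2=0$.

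To prove the claim, I test the jumps of $\sum_i c_i\bs\psi_i^k$ across the three interior edges $[\texttt v_c,\texttt v_j]$. A polynomial on $T$ has no jump there. By \eqref{eq:2d}, $\bs\psi_0^k$ is supported on $T_1\cup T_2$, so its only interior jumps are across $[\texttt v_c,\texttt v_0]=T_1\cap T_2$ and across the two edges $[\texttt v_c,\texttt v_1]$ and $[\texttt v_c,\texttt v_2]$, where the support boundary meets $T_0$. The key observation is that on the edge $[\texttt v_c,\texttt v_0]$ we have $\lambda_1^{\rm R}=\lambda_2^{\rm R}=0$, while $\lambda_0^{\rm R}$ is nonzero in the interior of that edge. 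On that edge, then, $\bs\psi_0^k$ restricted from $T_2$ equals $2(\lambda_0^{\rm R})^k\sym(\boldsymbol t_{c,0}\otimes\boldsymbol t_{c,1})$, and restricted from $T_1$ it equals $-2(\lambda_0^{\rm R})^k\sym(\boldsymbol t_{c,0}\otimes\boldsymbol t_{c,2})$. Its jump there is therefore
\[
2(\lambda_0^{\rm R})^k\sym\bigl(\boldsymbol t_{c,0}\otimes(\boldsymbol t_{c,1}+\boldsymbol t_{c,2})\bigr)=-2(\lambda_0^{\rm R})^k\,\boldsymbol t_{c,0}\otimes\boldsymbol t_{c,0},
\]
using $\boldsymbol t_{c,0}+\boldsymbol t_{c,1}+\boldsymbol t_{c,2}=0$. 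This jump is nonzero.

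Next I compute the contributions of $\bs\psi_1^k$ and $\bs\psi_2^k$ on the same edge $[\texttt v_c,\texttt v_0]$. By cyclic permutation, $\bs\psi_1^k$ carries a factor $\lambda_1^{\rm R}$ to the power at least $k-1$ on its support $T_2\cup T_0$. On $[\texttt v_c,\texttt v_0]$ that factor vanishes for $k\ge2$. For $k=1$ a term $\lambda_0^{\rm R}\boldsymbol t_{c,0}\otimes\boldsymbol t_{c,0}$ survives, and I have to track it explicitly. Similarly, $\bs\psi_2^k$ is supported on $T_0\cup T_1$ and carries powers of $\lambda_2^{\rm R}$.

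For $k\ge2$, the jump of $\sum_i c_i\bs\psi_i^k$ across $[\texttt v_c,\texttt v_0]$ is therefore $-2c_0(\lambda_0^{\rm R})^k\boldsymbol t_{c,0}\otimes\boldsymbol t_{c,0}$, which forces $c_0=0$. Cyclic symmetry then gives $c_1=c_2=0$.

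For $k=1$, the jump across $[\texttt v_c,\texttt v_0]$ has the form $\lambda_0^{\rm R}\boldsymbol t_{c,0}\otimes\boldsymbol t_{c,0}(-2c_0+\alpha c_1+\beta c_2)$ with explicit constants $\alpha,\beta$. The cyclic versions of this relation give a $3\times3$ circulant linear system in $(c_0,c_1,c_2)$, and I need to check that its determinant is nonzero. The constants follow from the one-sided values: the $T_2$ part of $\bs\psi_1^1$ is $-\lambda_2^{\rm R}\boldsymbol t_{c,0}\otimes\boldsymbol t_{c,0}$ up to the $\lambda_1^{\rm R}$ term, and from \eqref{eq:lambdaRprop1} I can evaluate $\lambda_2^{\rm R}$ relative to $\lambda_0^{\rm R}$ on that edge.

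The main obstacle is this careful bookkeeping of which one-sided traces survive, and in particular the $k=1$ circulant determinant. I will handle $k\ge2$ first, where the argument is clean, and then verify the small $k=1$ system explicitly. If it turns out to be singular, I will instead test the jumps against the tangential direction.
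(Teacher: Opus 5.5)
For $k\ge 2$ your proof is correct and is essentially the paper's. The paper compares the one-sided values of the combination at $\texttt v_0$ from $T_1$ and $T_2$, which is your jump across $[\texttt v_c,\texttt v_0]$ evaluated at one endpoint, and it then regroups \eqref{eq:dec} exactly as you do.

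The gap is $k=1$, which you defer, and the bookkeeping you sketch for it is wrong. By \eqref{eq:lambdaRprop1}, $\lambda_2^{\rm R}\equiv 0$ on $T_2$. A term $-\lambda_2^{\rm R}\boldsymbol t_{c,0}\otimes\boldsymbol t_{c,0}$ would therefore vanish identically and produce a spurious nonsingular system. Cyclically permuting \eqref{eq:2d} actually gives
\[
\bs\psi_1^1|_{T_2}=-2\lambda_1^{\rm R}\sym(\boldsymbol t_{c,1}\otimes\boldsymbol t_{c,0})+\lambda_0^{\rm R}\,\boldsymbol t_{c,0}\otimes\boldsymbol t_{c,0},
\qquad
\bs\psi_2^1|_{T_1}=2\lambda_2^{\rm R}\sym(\boldsymbol t_{c,2}\otimes\boldsymbol t_{c,0})-\lambda_0^{\rm R}\,\boldsymbol t_{c,0}\otimes\boldsymbol t_{c,0}.
\]
So the jump across $[\texttt v_c,\texttt v_0]$ is $(-2c_0+c_1+c_2)\,\lambda_0^{\rm R}\,\boldsymbol t_{c,0}\otimes\boldsymbol t_{c,0}$. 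That is, $\alpha=\beta=1$, and the circulant matrix with rows $(-2,1,1)$ is singular, with kernel spanned by $(1,1,1)$.

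No alternative test, tangential or otherwise, can repair this, because the statement itself is false for $k=1$. Work on each $T_i$ using \eqref{eq:lambdaRprop1}; for instance, on $T_2$ one has $v_2=0$, $\lambda_0^{\rm R}=\lambda_0-\lambda_2$ and $\lambda_1^{\rm R}=\lambda_1-\lambda_2$. One finds, for $k=1$,
\[
v_0+v_1+v_2=\tfrac{C_T}{2}(\lambda_0-\lambda_1)(\lambda_1-\lambda_2)(\lambda_2-\lambda_0)\quad\text{on all of } T,
\]
which is a single cubic polynomial that is not affine. By Lemma~\ref{lem:characterization2d}, $\bs\psi_0^1+\bs\psi_1^1+\bs\psi_2^1=J(v_0+v_1+v_2)$ is then a nonzero element of $\mathbb P_1(T;\mathbb S)$. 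Hence the sum is not direct, and $\mathbb P_1(T;\mathbb S)+\mathrm{span}\{\bs\psi_i^1\}$ has dimension $11$, not $12$. In particular, the $12$ degrees of freedom of Theorem~\ref{thm:unisolv_sigma_kpsi} cannot be unisolvent at $k=1$. This agrees with the paper's own description $U_3(T)=\mathbb P_3(T)\oplus\mathrm{span}\{v_0,v_1\}$, which uses only two enrichments.

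The paper's vertex argument has the same blind spot: for $k=1$, the contributions $c_1\boldsymbol t_{c,0}\otimes\boldsymbol t_{c,0}$ from $T_2$ and $-c_2\boldsymbol t_{c,0}\otimes\boldsymbol t_{c,0}$ from $T_1$ do not vanish at $\texttt v_0$. The lemma should be stated for $k\ge 2$, where your argument is complete. For $k=1$ the right conclusion is this one-dimensional dependence, not a search for another test.
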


\begin{proof}
We first show that the enrichment is direct. Assume
\begin{equation}\label{eq:20260205}
\boldsymbol{\tau}
+
c_0\boldsymbol{\psi}^k_{0}
+
c_1\boldsymbol{\psi}^k_{1}
+
c_2\boldsymbol{\psi}^k_{2}
= 0,
\end{equation}
where $\boldsymbol{\tau}\in \mathbb{P}_k(T; \mathbb{S})$ and $c_i\in\mathbb R$ for $i=0,1,2$. We show that $\boldsymbol{\tau}=0$ and $c_0=c_1=c_2=0$.

Consider the vertex $\texttt{v}_0$. Evaluate \eqref{eq:20260205} at $\texttt{v}_0$ from the two subtriangles $T_1$ and $T_2$ sharing the edge $[\texttt v_0,\texttt v_c]$. Since $\lambda_0^{\rm R}(\texttt v_0)=1$ and $\lambda_1^{\rm R}(\texttt v_0)=\lambda_2^{\rm R}(\texttt v_0)=0$, we obtain
\[
\boldsymbol{\tau}(\texttt{v}_0)
+ c_0\bigl(2\,\sym(\boldsymbol{t}_{c,0}\otimes \boldsymbol{t}_{c,1})\bigr)
=
\boldsymbol{\tau}(\texttt{v}_0)
- c_0\bigl(2\,\sym(\boldsymbol{t}_{c,0}\otimes \boldsymbol{t}_{c,2})\bigr).
\]
Since $\boldsymbol{\tau}$ is a single-valued polynomial on $T$ and $\boldsymbol{t}_{c,1}$ and $\boldsymbol{t}_{c,2}$ are not parallel, this implies $c_0=0$. Repeating the same argument at $\texttt{v}_1$ and $\texttt{v}_2$ gives $c_1=c_2=0$. Hence \eqref{eq:20260205} reduces to $\boldsymbol{\tau}=0$, and the sum is direct.

The decomposition \eqref{geodecomp:Sigmak} now follows from \eqref{eq:dec} and the definition of $\Sigma_{k,\psi}^{\operatorname{div}}(T;\mathbb S)$.
\end{proof}

As $k$ is fixed, we will write $\bs\psi_i$ in place of $\bs\psi_i^k$.

\begin{theorem}[Unisolvence]\label{thm:unisolv_sigma_kpsi}
For $k\geq 1$, the degrees of freedom \eqref{DoF}
\begin{subequations}\label{DoF}
\begin{align}\label{eq:DoF1}
\int_e (\bs \sigma \bs n)\cdot \bs q \,{\rm d}s, &
\quad \bs q\in \mathbb P_{k}(e; \mathbb R^2), \ e\subset\partial T,\\
\label{eq:DoF2}
\int_T \bs \sigma: \bs \tau \,{\rm d}x, &
\quad \bs \tau \in \mathbb P_{k-2}(T; \mathbb S),
\end{align}
\end{subequations}
are unisolvent for
\[
\Sigma_{k,\psi}^{\div}(T;\mathbb S)
=
\mathbb P_k(T;\mathbb S)\oplus {\rm span}\{\boldsymbol\psi_0,\boldsymbol\psi_1,\boldsymbol\psi_2\}.
\]
\end{theorem}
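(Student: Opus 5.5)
The plan is to count dimensions, then show that an element of $\Sigma_{k,\psi}^{\div}(T;\mathbb S)$ whose degrees of freedom \eqref{DoF} all vanish must be zero.

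\textbf{Dimension count.} By Lemma~\ref{lem:geodecompSigmak}, $\dim\Sigma_{k,\psi}^{\div}(T;\mathbb S)=\tfrac32(k+1)(k+2)+3$. The number of degrees of freedom is $6(k+1)+\tfrac32k(k-1)$. Expanding both expressions gives the same value, so injectivity of the DoF map suffices.

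\textbf{Step 1: reduce to a divergence-free field with zero normal trace.} Let $\boldsymbol\sigma=\boldsymbol\tau+\sum_i c_i\boldsymbol\psi_i$ with $\boldsymbol\tau\in\mathbb P_k(T;\mathbb S)$, and assume \eqref{DoF} vanish.

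First, $\boldsymbol\sigma\boldsymbol n|_e\in\mathbb P_k(e;\mathbb R^2)$ on each edge of $T$. Indeed, the edge $e_i$ lies in the single subtriangle $T_i$, and each $\boldsymbol\psi_j$ restricted to $T_i$ is a polynomial of degree $k$. Hence \eqref{eq:DoF1} forces $\boldsymbol\sigma\boldsymbol n=0$ on $\partial T$.

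Next, $\div\boldsymbol\psi_i=0$ (taken from the paper's remark, verified in Section~\ref{sec:localenrichment}), so $\div\boldsymbol\sigma=\div\boldsymbol\tau\in\mathbb P_{k-1}(T;\mathbb R^2)$. For $\boldsymbol v=\div\boldsymbol\sigma$, integrate by parts:
\[
\int_T|\div\boldsymbol\sigma|^2\,{\rm d}x=-\int_T\boldsymbol\sigma:\boldsymbol\varepsilon(\boldsymbol v)\,{\rm d}x+\int_{\partial T}\boldsymbol\sigma\boldsymbol n\cdot\boldsymbol v\,{\rm d}s .
\]
Since $\boldsymbol\varepsilon(\boldsymbol v)\in\mathbb P_{k-2}(T;\mathbb S)$, the volume term vanishes by \eqref{eq:DoF2}, and the boundary term vanishes by the previous paragraph. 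Thus $\div\boldsymbol\sigma=0$. (Step 1 may not be strictly needed for Step 2, but it is cheap and records structure.)

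\textbf{Step 2 (main obstacle): show $c_0=c_1=c_2=0$.} I would localize at vertices, as in the proof of Lemma~\ref{lem:geodecompSigmak}.

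At $\texttt v_0$, the functions $\boldsymbol\psi_1$ and $\boldsymbol\psi_2$ vanish on both $T_1$ and $T_2$. Their terms carry a factor $\lambda_1^{\rm R}$ or $\lambda_2^{\rm R}$, which is zero at $\texttt v_0$; this holds also for $k=1$. Also, $e_2\subset T_2$ and $e_1\subset T_1$. The zero normal traces at $\texttt v_0$ therefore give
\[
\bigl(\boldsymbol\tau(\texttt v_0)+2c_0\sym(\boldsymbol t_{c,0}\otimes\boldsymbol t_{c,1})\bigr)\boldsymbol n_{e_2}=0,\qquad
\bigl(\boldsymbol\tau(\texttt v_0)-2c_0\sym(\boldsymbol t_{c,0}\otimes\boldsymbol t_{c,2})\bigr)\boldsymbol n_{e_1}=0 .
\]
This is four linear equations in the four unknowns $(\boldsymbol\tau(\texttt v_0),c_0)$.

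I must show this system is nonsingular. My first attempt would be to work in the basis $\{\boldsymbol n_{e_1},\boldsymbol n_{e_2}\}$, or in affine-reference coordinates with $\texttt v_0=(0,0)$, $\texttt v_1=(1,0)$, $\texttt v_2=(0,1)$. The geometric picture is as follows. A symmetric $\boldsymbol\tau$ annihilating both $\boldsymbol n_{e_1}$ and $\boldsymbol n_{e_2}$ must be zero. So the question reduces to whether $c_0$ can be nonzero, that is, whether a compatibility determinant involving $\boldsymbol t_{c,0},\boldsymbol t_{c,1},\boldsymbol t_{c,2}$ is nonzero. Affine invariance of the barycentric split makes the reference computation sufficient. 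Note that pure divergence-freeness would not give this: $\boldsymbol\psi_0$ itself is divergence-free, though not with zero normal trace. So the boundary information must genuinely be used here, and this determinant is where I expect the real work to be.

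\textbf{Step 3: conclude $\boldsymbol\tau=0$.} With $c_i=0$, we have $\boldsymbol\sigma=\boldsymbol\tau\in\mathbb P_k(T;\mathbb S)$ with zero normal trace, so $\boldsymbol\tau\in\mathbb B_k^{\div}(T;\mathbb S)$. By the Hu--Zhang characterization, $\dim\mathbb B_k^{\div}(T;\mathbb S)=3\dim\mathbb P_{k-2}(T)=\dim\mathbb P_{k-2}(T;\mathbb S)$. The vanishing of \eqref{eq:DoF2} then gives $\boldsymbol\tau=0$, by the standard unisolvence of interior moments on $\mathbb B_k^{\div}$ (Hu--Zhang). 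Alternatively, write $\boldsymbol\tau=\sum_e q_e b_e\boldsymbol t_e\otimes\boldsymbol t_e$ and test against a dual basis of $\{\boldsymbol t_e\otimes\boldsymbol t_e\}$ multiplied by $q_e$; positivity of $b_e$ yields $q_e=0$.
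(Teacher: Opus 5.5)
Your outline (dimension count, $\bs\sigma\bs n|_{\partial T}=0$ from polynomial edge traces, vertex localization to remove the enrichment, then interior moments on $\mathbb B_k^{\div}(T;\mathbb S)$) is the paper's. However, Step~2, the only step that is not bookkeeping, is never carried out: you stop at an unevaluated ``compatibility determinant.'' The missing observation is as follows. Zero normal trace on $e_2\subset T_2$ and on $e_1\subset T_1$ forces $\bs\sigma|_{T_2}(\texttt v_0)=\alpha\,\bs t_{e_2}\otimes\bs t_{e_2}$ and $\bs\sigma|_{T_1}(\texttt v_0)=\beta\,\bs t_{e_1}\otimes\bs t_{e_1}$. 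Subtracting your two expressions eliminates $\bs\tau(\texttt v_0)$, and since $\bs t_{c,0}+\bs t_{c,1}+\bs t_{c,2}=0$,
\[
\alpha\,\bs t_{e_2}\otimes\bs t_{e_2}-\beta\,\bs t_{e_1}\otimes\bs t_{e_1}
=2c_0\sym\bigl(\bs t_{c,0}\otimes(\bs t_{c,1}+\bs t_{c,2})\bigr)
=-2c_0\,\bs t_{c,0}\otimes\bs t_{c,0}.
\]
Rank-one tensors along three pairwise non-parallel directions are linearly independent in $\mathbb S$. Hence $\alpha=\beta=c_0=0$, and then $\bs\tau(\texttt v_0)=0$. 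The paper does this in two moves: normal continuity across $[\texttt v_0,\texttt v_c]$ gives $\alpha=\beta=0$, and then the vertex jump $-2c_0\,\bs t_{c,0}\otimes\bs t_{c,0}$ gives $c_0=0$. In your reference coordinates it is just the off-diagonal entries of your two equations, which yield $-2c_0/9=0$.

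The more serious problem is your claim that $\bs\psi_1,\bs\psi_2$ vanish at $\texttt v_0$ on $T_1\cup T_2$ ``also for $k=1$.'' This is false. The term $k(\lambda_1^{\rm R})^{k-1}\lambda_0^{\rm R}\,\bs t_{c,0}\otimes\bs t_{c,0}$ of $\bs\psi_1|_{T_2}$ has no factor $\lambda_1^{\rm R}$ when $k=1$, so $\bs\psi_1|_{T_2}(\texttt v_0)=\bs t_{c,0}\otimes\bs t_{c,0}$; likewise $\bs\psi_2|_{T_1}(\texttt v_0)=-\bs t_{c,0}\otimes\bs t_{c,0}$. The vertex conditions then give only $-2c_0+c_1+c_2=0$ and its cyclic analogues, which allow $c_0=c_1=c_2$. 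This cannot be repaired. For $k=1$, set $M_i:=\sym\bigl(\bs t_{c,i}\otimes(\bs t_{c,i+1}-\bs t_{c,i-1})\bigr)$ (indices mod $3$, so $M_0+M_1+M_2=0$). One finds $(\bs\psi_0+\bs\psi_1+\bs\psi_2)|_{T_2}=\lambda_0^{\rm R}M_0+\lambda_1^{\rm R}M_1=\lambda_0M_0+\lambda_1M_1+\lambda_2M_2$ by \eqref{eq:lambdaRprop1}, and cyclically on $T_0,T_1$. This is a nonzero element of $\mathbb P_1(T;\mathbb S)$. So the sum in Lemma~\ref{lem:geodecompSigmak} is not direct, and $\dim\Sigma_{1,\psi}^{\div}(T;\mathbb S)=11$, fewer than the $12$ DoFs. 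A direct check confirms this: every $\bs\sigma$ in the space is symmetric with constant divergence, so for the infinitesimal rotation $\bs q$ about $\texttt v_c$ one gets $\int_{\partial T}(\bs\sigma\bs n)\cdot\bs q\,{\rm d}s=\div\bs\sigma\cdot\int_T\bs q\,{\rm d}x=0$, a nontrivial relation among the DoFs \eqref{eq:DoF1}. Thus the statement holds only for $k\ge 2$, and your dimension count inherits the same restriction. For $k\ge2$ your argument, completed as above, is the paper's. The paper's proof has the same blind spot: the vertex evaluation in the proof of Lemma~\ref{lem:geodecompSigmak} tacitly uses $k\ge 2$.
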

\begin{proof}
By \eqref{geodecomp:Sigmak}, the number of degrees of freedom in \eqref{DoF} equals $\dim \Sigma_{k,\psi}^{\div}(T;\mathbb S)$. It remains to prove uniqueness.

Let $\bs\tau\in \Sigma_{k,\psi}^{\div}(T;\mathbb S)$ and assume that all degrees of freedom in \eqref{DoF} vanish. Then \eqref{eq:DoF1} implies $(\bs\tau\bs n)|_{\partial T}=0$. In particular,
\begin{equation*}
(\bs\tau|_{T_1})(\texttt{v}_0)=c_1\boldsymbol{t}_{e_1}\otimes \boldsymbol{t}_{e_1}, \quad
(\bs\tau|_{T_2})(\texttt{v}_0)=c_2\boldsymbol{t}_{e_2}\otimes \boldsymbol{t}_{e_2},\quad c_1, c_2\in\mathbb R.
\end{equation*}
Since $\bs\tau\in H(\div, T;\mathbb S)$ and $\boldsymbol{t}_{e_1}$ and $\boldsymbol{t}_{e_2}$ are not parallel, we must have $c_1=c_2=0$. Hence $\bs\tau(\texttt{v}_0)=0$. The same argument at the other two vertices shows that $\bs\tau$ vanishes at all vertices.

We now apply the argument in the proof of Lemma~\ref{lem:geodecompSigmak}. Since $\bs\tau$ vanishes at the three vertices, the enrichment part must be zero, and thus $\bs\tau\in \mathbb P_k(T;\mathbb S)$. Together with $(\bs\tau\bs n)|_{\partial T}=0$, this yields $\bs\tau\in\mathbb B_k^{\operatorname{div}}(T; \mathbb{S})$. Finally, the vanishing of \eqref{eq:DoF2} implies $\bs\tau=0$; see, for example, \cite[Lemma 4.3]{ChenHuang2022}.
\end{proof}

Define the global stress space by requiring the edge degrees of freedom to be single-valued across interelement edges. Let $\mathcal E_h$ be the set of edges of $\mathcal T_h$. We set
\[
\begin{aligned}
\Sigma_{k,h}
:=
\Bigl\{\bs\sigma\in L^2(\Omega;\mathbb S):
&\ \bs\sigma|_T\in \Sigma_{k,\psi}^{\div}(T;\mathbb S)\ \forall\,T\in\mathcal T_h,\\
&
\ \text{and \eqref{eq:DoF1} is single-valued on each }e\in\mathcal E_h
\Bigr\}.
\end{aligned}
\]
Then $\Sigma_{k,h}\subset H(\div,\Omega;\mathbb S)$.

\begin{remark}\rm
In view of the edge degrees of freedom \eqref{eq:DoF1}, the family $\Sigma_{k, \psi}^{\operatorname{div}}(\mathcal T_h; \mathbb{S})$ can be viewed as a symmetric-tensor analogue of the Brezzi--Douglas--Marini $H(\div)$-conforming vector elements \cite{BrezziDouglasMarini1985,BrezziDouglasDuranFortin1987}. This construction is not possible with $\mathbb P_k(T; \mathbb S)$ alone, but becomes possible after enrichment.

By increasing the number of interior degrees of freedom, one can also construct a Raviart--Thomas-type element \cite{RaviartThomas1977}. The edge moments \eqref{eq:DoF1} remain unchanged, while the interior moments \eqref{eq:DoF2} are replaced by
\[
\int_T \bs \sigma : \bs \tau \,{\rm d}x,
\quad \bs\tau\in \mathbb P_{k-1}(T;\mathbb S).
\]
For $k\geq1$, the corresponding local shape function space is
\[
\Sigma_{k+1^{-}, h}(T; \mathbb{S})
=
\mathbb B_{k+1}^{\operatorname{div}}(T; \mathbb{S})
\oplus \widetilde{\Sigma}_{1, \psi}^{\operatorname{div}}(T; \mathbb{S})
\oplus \Oplus_{e\in\Delta_1(T)} b_e \mathbb{P}_{k-2}(e; \mathscr N^e(\mathbb S)),
\]
where $\mathbb B_{k}^{\operatorname{div}}(T; \mathbb{S})$ in $\Sigma_{k, h}(T; \mathbb{S})$ is enriched to $\mathbb B_{k+1}^{\operatorname{div}}(T; \mathbb{S})$ and the resulting global space, denoted by $\Sigma_{k+1^{-},h}$, has an RT-type structure.
$\square$ \end{remark}

\section{$C^1$ Finite Element Spaces on Barycentric Refinements}\label{sec:C1fe}
This section constructs the scalar $C^1$ finite element space on the barycentric refinement and shows how it fits the stress element through the Airy operator.

\subsection{Local potential function}\label{sec:localenrichment}
In two dimensions, we define the rotated Hessian, also called the Airy operator, by
\begin{equation}
\label{eq:Jdef}
J(v)
=
\begin{pmatrix}
0 & -1\\
1 & 0
\end{pmatrix}
\nabla^2 v
\begin{pmatrix}
0 & 1\\
-1 & 0
\end{pmatrix}
=
\begin{pmatrix}
\partial_{yy} v & -\partial_{yx} v\\
-\partial_{xy} v & \partial_{xx} v
\end{pmatrix}.
\end{equation}
It is straightforward to check that $\div J(v)=0$. 

\begin{figure}[htbp]
\begin{center}
\includegraphics[width=4.2cm]{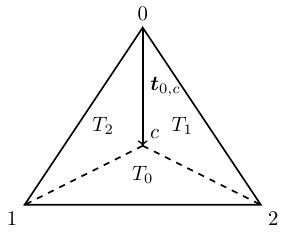}
\caption{Barycentric refinement of a triangle.}
\label{fig:2Dsplit}
\end{center}
\end{figure}

We now construct an $H^2$-conforming scalar finite element on the barycentrically refined triangle $T^{\rm R}$. We will use
\begin{equation}\label{eq:grad=trot}
\begin{aligned}
\nabla^{\bot}(\lambda_{1}^{\rm R}|_{T_2})
:=
\begin{pmatrix}
0 & 1\\
-1 & 0
\end{pmatrix}
\nabla(\lambda_{1}^{\rm R}|_{T_2})
=
\frac{\boldsymbol t_{c,0}}{2|T_2|}
=
\frac{3\boldsymbol t_{c,0}}{2|T|}, \\
\nabla^{\bot}(\lambda_{2}^{\rm R}|_{T_1})
:=
\begin{pmatrix}
0 & 1\\
-1 & 0
\end{pmatrix}
\nabla(\lambda_{2}^{\rm R}|_{T_1})
=
\frac{\boldsymbol t_{0,c}}{2|T_1|}
=
\frac{3\boldsymbol t_{0,c}}{2|T|}.
\end{aligned}
\end{equation}

\begin{lemma}[Local Airy potential of the divergence-free basis $\boldsymbol{\psi}^k_0$]
\label{lem:characterization2d}
Let $T$ be a triangle with barycentric refinement $T^{\rm R}$ and barycenter $\texttt v_c$. Let $T_1$ and $T_2$ be the two subtriangles adjacent to the interior edge $[\texttt v_0,\texttt v_c]$, that is,
$T_1\cap T_2=[\texttt v_0,\texttt v_c]$. For an integer $k\ge 1$, define
\begin{equation}\label{eq:v0def}
\begin{aligned}
v_0
&=
\frac{4|T|^2}{9(k+1)}(\lambda_0^{\rm R})^{k+1}(\lambda_2-\lambda_1) =
\frac{4|T|^2}{9(k+1)}(\lambda_0^{\rm R})^{k+1}
\bigl(\lambda_2^{\rm R}\chi_{T_1}-\lambda_1^{\rm R}\chi_{T_2}\bigr).
\end{aligned}
\end{equation}
Then $v_0\in C^1(T)$. Moreover,
\[
J(v_0)=\boldsymbol{\psi}^k_0,
\]
where $\boldsymbol{\psi}^k_0$ is given in \eqref{eq:2d}. Consequently,
$\boldsymbol{\psi}^k_0\in H(\div,T;\mathbb S)$ and
$\div\boldsymbol{\psi}^k_0=0$ on $T$.
\end{lemma}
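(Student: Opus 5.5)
The plan is to verify the two claims separately. First, $v_0$ is $C^1$ on $T$, using only the piecewise-linear structure of $\lambda_i^{\rm R}$. Second, $J(v_0)$ agrees with $\bs\psi_0^k$ on each subtriangle, by a direct Hessian computation using \eqref{eq:grad=trot}. The divergence-free and $H(\div)$ statements then follow at once: $J(v_0)$ is the Airy image of an $H^2(T)$ function (piecewise polynomial and $C^1$), so $J(v_0)\in H(\div,T;\mathbb S)$ with $\div J(v_0)=0$ by \eqref{eq:2Dcomplex}.

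\textbf{Support and $C^1$-continuity.} By \eqref{eq:lambdaRprop1}, $\lambda_0^{\rm R}|_{T_0}=\lambda_0-\lambda_0=0$, so $v_0$ vanishes on $T_0$. There are three interior edges to check.
\begin{itemize}
\item \emph{The edges $[\texttt v_1,\texttt v_c]$ and $[\texttt v_2,\texttt v_c]$}, which separate $T_0$ from $T_2$ and $T_1$. On each side $\lambda_0^{\rm R}$ is a linear function vanishing on the edge. Hence $(\lambda_0^{\rm R})^{k+1}$ with $k+1\ge 2$ vanishes to second order there, and so $v_0$ and $\nabla v_0$ match the zero function from $T_0$.
\item \emph{The edge $[\texttt v_0,\texttt v_c]=T_1\cap T_2$.} Write $v_0=Cf g$ with $f=(\lambda_0^{\rm R})^{k+1}$ (continuous, piecewise polynomial) and $g=\lambda_2-\lambda_1$ (a global polynomial on $T$). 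Then $\nabla(fg)=g\nabla f+f\nabla g$. The term $f\nabla g$ is continuous across the edge. The term $g\nabla f$ has jump $g\,[\![\nabla f]\!]$, which vanishes because $\lambda_1=\lambda_2$ on the median $[\texttt v_0,\texttt v_c]$.
\end{itemize}
Hence $v_0\in C^1(T)$. The second expression in \eqref{eq:v0def} is just \eqref{eq:lambdaRprop2}.

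\textbf{Computing $J(v_0)$ on $T_2$.} Here $v_0=-C(\lambda_0^{\rm R})^{k+1}\lambda_1^{\rm R}$ with $C=4|T|^2/(9(k+1))$, and both factors are linear on $T_2$. For linear $a,b$ one has $J(v)=\nabla^{\bot}(\nabla^{\bot}v)^{\intercal}$, so the Hessian-type identity reads
\[
J(a^{k+1}b)=(k+1)k\,a^{k-1}b\,\nabla^{\bot}a\otimes\nabla^{\bot}a+2(k+1)a^k\sym(\nabla^{\bot}a\otimes\nabla^{\bot}b).
\]
By \eqref{eq:grad=trot}, $\nabla^{\bot}\lambda_1^{\rm R}|_{T_2}=3\bs t_{c,0}/(2|T|)$. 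By the same reasoning, since $\lambda_0^{\rm R}|_{T_2}$ vanishes on $[\texttt v_1,\texttt v_c]$ and equals $1$ at $\texttt v_0$, we get $\nabla^{\bot}\lambda_0^{\rm R}|_{T_2}=\pm3\bs t_{c,1}/(2|T|)$. The sign is fixed by the orientation of $T_2=[\texttt v_0,\texttt v_1,\texttt v_c]$, matching the convention used in \eqref{eq:grad=trot}.

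Substituting, the prefactor becomes $C(k+1)\cdot 9/(4|T|^2)=1$, which yields
\[
2(\lambda_0^{\rm R})^k\sym(\bs t_{c,0}\otimes\bs t_{c,1})-k(\lambda_0^{\rm R})^{k-1}\lambda_1^{\rm R}\,\bs t_{c,1}\otimes\bs t_{c,1},
\]
up to the overall sign. On $T_1$ the identical computation with $v_0=C(\lambda_0^{\rm R})^{k+1}\lambda_2^{\rm R}$ applies, now with $\nabla^{\bot}\lambda_2^{\rm R}|_{T_1}=3\bs t_{0,c}/(2|T|)$ and $\nabla^{\bot}\lambda_0^{\rm R}|_{T_1}$ proportional to $\bs t_{c,2}$. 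The reversed vector $\bs t_{0,c}=-\bs t_{c,0}$ produces exactly the sign flip seen in the $\chi_{T_1}$ part of \eqref{eq:2d}. On $T_0$ both sides vanish.

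\textbf{Main obstacle.} The substantive issue is bookkeeping of signs: the orientation of each subtriangle in \eqref{eq:grad=trot}, the sign convention relating $J$ to $\nabla^{\bot}$, and the sign of $\nabla^{\bot}\lambda_0^{\rm R}$ on $T_1$ versus $T_2$. I would fix a concrete reference triangle, for instance $\texttt v_0=(0,0)$, $\texttt v_1=(1,0)$, $\texttt v_2=(0,1)$, to confirm that the signs agree with \eqref{eq:2d}. Since both sides transform covariantly under affine maps, the identity then holds in general.
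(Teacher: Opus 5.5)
Your proposal is correct and follows the paper's proof: the same $C^1$ check (the factor $(\lambda_0^{\rm R})^{k+1}$ kills $v_0$ and $\nabla v_0$ on $[\texttt v_1,\texttt v_c]$ and $[\texttt v_2,\texttt v_c]$, and $\lambda_2-\lambda_1=0$ on $[\texttt v_0,\texttt v_c]$ removes the gradient jump there), followed by the same Hessian computation on $T_2$ and $T_1$ via \eqref{eq:grad=trot}, using $C_T/(4|T_2|^2)=1$. One small correction: the sign you leave open is not an overall sign but that of $\nabla^{\bot}\lambda_0^{\rm R}$, which equals $-\boldsymbol t_{c,1}/(2|T_2|)$ on $T_2$ and $+\boldsymbol t_{c,2}/(2|T_1|)$ on $T_1$ in the orientation behind \eqref{eq:grad=trot}; this sign fixes the $\sym$ cross term relative to the rank-one term, and your reference-triangle check, combined with the affine covariance $J(v)=(\det B)^{-2}B\,\hat J(\hat v)B^{\intercal}$ (compensated by the factor $|T|^2$ in $v_0$), does settle it correctly.
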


\begin{proof}
Clearly, $v_0$ is continuous and is supported on $T_1\cup T_2$. On the edges $[\texttt v_1,\texttt v_c]$ and $[\texttt v_2,\texttt v_c]$, we have $\lambda_0^{\rm R}=0$. Since $\nabla v_0$ contains the factor $\lambda_0^{\rm R}$, it follows that
$$
\nabla v_0|_{[\texttt v_1,\texttt v_c]}=\nabla v_0|_{[\texttt v_2,\texttt v_c]}=0.
$$
On the edge $[\texttt v_0,\texttt v_c]$, we have $(\lambda_2-\lambda_1)|_{[\texttt v_0,\texttt v_c]}=0$, so $\nabla v_0$ is also continuous across this edge. Hence $v_0\in C^1(T)$.

We next compute $J(v_0)$. On $T_2$,
\[
\nabla^2 v_0|_{T_2}
=
-C_T\Bigl[
k(\lambda_0^{\rm R})^{k-1}\lambda_1^{\rm R}\,
\nabla\lambda_0^{\rm R}\otimes\nabla\lambda_0^{\rm R}
+(\lambda_0^{\rm R})^{k}
\bigl(\nabla\lambda_0^{\rm R}\otimes\nabla\lambda_1^{\rm R}
+\nabla\lambda_1^{\rm R}\otimes\nabla\lambda_0^{\rm R}\bigr)
\Bigr],
\]
where $C_T:=4|T|^2/9$. Applying the rotation and using \eqref{eq:grad=trot}, we obtain
\[
J(v_0)|_{T_2}
=
-C_T\Bigl[
\frac{k}{4|T_2|^2}(\lambda_0^{\rm R})^{k-1}\lambda_1^{\rm R}\,
\boldsymbol t_{c,1}\otimes\boldsymbol t_{c,1}
-\frac{1}{2|T_2|^2}(\lambda_0^{\rm R})^{k}\,
\sym(\boldsymbol t_{c,1}\otimes\boldsymbol t_{c,0})
\Bigr].
\]
The computation on $T_1$ is analogous. Since $|T_1|=|T_2|=|T|/3$, we have $C_T/(4|T_2|^2)=1$. Therefore $J(v_0)=\boldsymbol{\psi}^k_0$ on $T^{\rm R}$.
\end{proof}

\subsection{A simple $C^1$ element}
The enrichment functions $\{v_0,v_1,v_2\}$ allow us to construct a $C^1$ element without the vertex $C^2$ smoothness required by the Argyris element \cite{ArgyrisFriedScharpf1968}. From the viewpoint of lattice decomposition~\cite{ChenHuang2021Cmgeodecomp}, and as shown in Fig.~\ref{fig:HCTenrichment}, to enforce continuity of the normal derivative, the red lattice points in Fig.~\ref{fig:HCTenrichment}(B) must be assigned to two edges at the same time, which is impossible. The Argyris element resolves this by taking these red lattice points as extra vertex degrees of freedom; see Fig.~\ref{fig:HCTenrichment}(A). After adding the enrichments $v_i$, one red lattice point is replaced by two blue lattice points in Fig.~\ref{fig:HCTenrichment}(C), which can then be assigned to two different edges.

\begin{figure}[htbp]
\centering
\subfloat[Argyris element $U_{8}^{\rm Arg}(T)$]{
\begin{minipage}[t]{0.3\linewidth}
\centering
\includegraphics[width=0.85\linewidth]{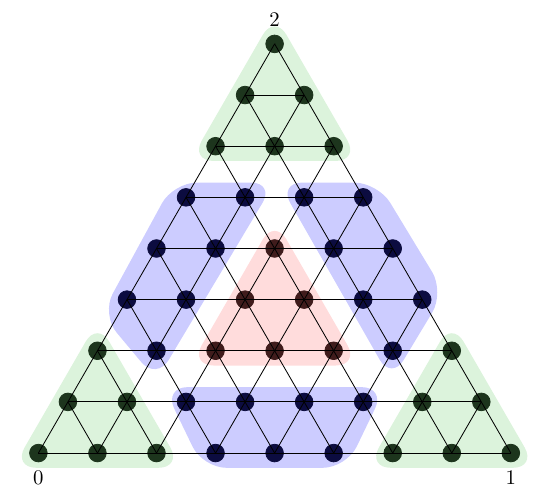}
\end{minipage}
}\hfill
\subfloat[Trouble decomposition]{
\begin{minipage}[t]{0.3\linewidth}
\centering
\includegraphics[width=0.85\linewidth]{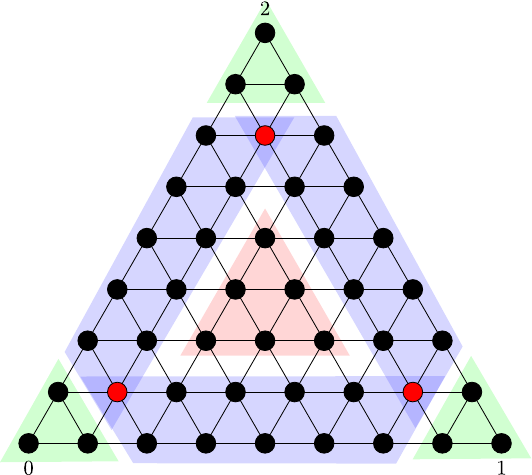}
\end{minipage}
}\hfill
\subfloat[Element $U_{8}(T)$]{
\begin{minipage}[t]{0.3\linewidth}
\centering
\includegraphics[width=0.85\linewidth]{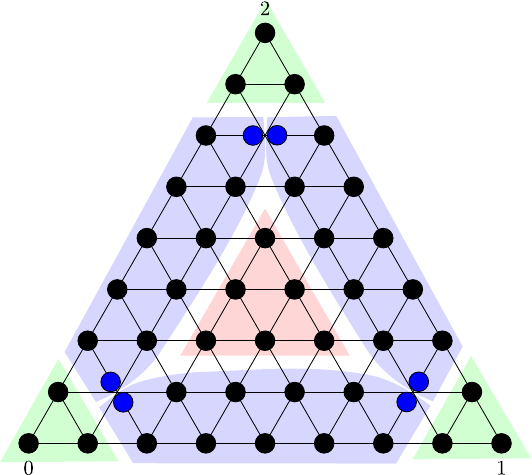}
\end{minipage}
}
\caption{The red lattice points in (B) are duplicated in (C). The copies can be assigned to different edges, which gives enough degrees of freedom for the normal derivative.}
\label{fig:HCTenrichment}
\end{figure}

Let
$$
U_{k+2}(T)=\mathbb{P}_{k+2}(T)\oplus {\rm span}\{v_0,v_1,v_2\}, \qquad k\ge 2.
$$
For this $C^1$ element, the normal derivative on $\partial T$ will be part of the degrees of freedom. To construct basis functions dual to the edge normal-derivative degrees of freedom, we use the polynomials associated with the red lattice points,
\[
\lambda_0^k\lambda_1\lambda_2,\qquad
\lambda_1^k\lambda_0\lambda_2,\qquad
\lambda_2^k\lambda_0\lambda_1,
\]
together with the enrichment functions.

For each $i\in\{0,1,2\}$, with indices understood modulo $3$, define
\[
\begin{aligned}
v_{i,i+1}
&:= \frac{C_T}{k+1}\bigl[(\lambda_i^{\rm R})^{k+1}-\lambda_i^{k+1}\bigr]
   \bigl(\lambda_{i-1}-\lambda_{i+1}\bigr)
   - C_T\,\lambda_i^k\lambda_{i+1}\lambda_{i-1},\\
v_{i,i-1}
&:= \frac{C_T}{k+1}\bigl[(\lambda_i^{\rm R})^{k+1}-\lambda_i^{k+1}\bigr]
   \bigl(\lambda_{i-1}-\lambda_{i+1}\bigr)
   + C_T\,\lambda_i^k\lambda_{i+1}\lambda_{i-1}.
\end{aligned}
\]

To fix the discussion, we consider the vertex $\texttt v_0$ and its two adjacent edges $e_1$ and $e_2$.

\begin{lemma}
For $k\geq 2$, the functions $v_{0,1}$ and $v_{0,2}$,
\[
\begin{aligned}
v_{0,1}
   &= \frac{C_T}{k+1}\bigl[(\lambda_0^{\rm R})^{k+1}-\lambda_0^{k+1}\bigr](\lambda_2-\lambda_1)
   - C_T\lambda_0^k\lambda_1\lambda_2,\\
v_{0,2}
   &= \frac{C_T}{k+1}\bigl[(\lambda_0^{\rm R})^{k+1}-\lambda_0^{k+1}\bigr](\lambda_2-\lambda_1)
   + C_T\lambda_0^k\lambda_1\lambda_2,
\end{aligned}
\]
satisfy
\[
\begin{aligned}
v_{0,1}\big|_{\partial T} &= 0,\qquad
\nabla v_{0,1}\big|_{\Delta_0(T)} = 0,\qquad
\frac{\partial v_{0,1}}{\partial n}\bigg|_{e_i}
= -\frac{8|T|^2}{9}(\nabla\lambda_1\cdot\boldsymbol{n}_{1})\,\lambda_0^k\lambda_2\,\delta_{1,i},\\[4pt]
v_{0,2}\big|_{\partial T} &= 0,\qquad
\nabla v_{0,2}\big|_{\Delta_0(T)} = 0,\qquad
\frac{\partial v_{0,2}}{\partial n}\bigg|_{e_i}
= \frac{8|T|^2}{9}(\nabla\lambda_2\cdot\boldsymbol{n}_{2})\,\lambda_0^k\lambda_1\,\delta_{2,i},
\end{aligned}
\]
where $\delta_{i,j}$ denotes the Kronecker delta for $i, j = 0,1,2$. 
\end{lemma}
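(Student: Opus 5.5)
Write $v_{0,1}=A-B$ and $v_{0,2}=A+B$ with
\[
A:=\frac{C_T}{k+1}\bigl[(\lambda_0^{\rm R})^{k+1}-\lambda_0^{k+1}\bigr](\lambda_2-\lambda_1),\qquad B:=C_T\lambda_0^k\lambda_1\lambda_2 .
\]
By Lemma~\ref{lem:characterization2d}, the first part of $A$ is $v_0$, which lies in $C^1(T)$. The second part is a polynomial. Hence $A\in C^1(T)$, and it suffices to compute the traces of $A$ and $B$ and of their gradients on each edge $e_i$. The strategy is to show that $A$ and $B$ have the same boundary behaviour except for the normal derivatives. These turn out to be equal on $e_1$ and opposite on $e_2$ (or vice versa), so one of them cancels in $A-B$ and the other in $A+B$.

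First consider the traces. On $e_0$ we have $\lambda_0=0$. Moreover $e_0\subset T_0$, where $\lambda_0^{\rm R}=\lambda_0-\lambda_0=0$, and $v_0$ is supported on $T_1\cup T_2$. So $A$, $\nabla A$, $B$ and $\nabla B$ all vanish on $e_0$ (note $k\ge2$). On $e_1$ we have $\lambda_1=0$ and $e_1\subset T_1$, so $\lambda_0^{\rm R}=\lambda_0-\lambda_1=\lambda_0$ by \eqref{eq:lambdaRprop1}. The bracket $(\lambda_0^{\rm R})^{k+1}-\lambda_0^{k+1}$ therefore vanishes on $e_1$, and so does $A$. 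Similarly, on $e_2\subset T_2$ we have $\lambda_0^{\rm R}=\lambda_0-\lambda_2=\lambda_0$, so $A|_{e_2}=0$. The function $B$ vanishes on all of $\partial T$. This gives $v_{0,j}|_{\partial T}=0$.

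Next, the normal derivatives on $e_1$. Work on $T_1$, where $\lambda_0^{\rm R}=\lambda_0-\lambda_1$. Only the derivative of the vanishing bracket contributes:
\[
\nabla A|_{e_1}=\frac{C_T}{k+1}(k+1)\lambda_0^k\bigl(\nabla\lambda_0^{\rm R}-\nabla\lambda_0\bigr)\lambda_2=-C_T\lambda_0^k\lambda_2\nabla\lambda_1 .
\]
Also $\nabla B|_{e_1}=C_T\lambda_0^k\lambda_2\nabla\lambda_1$. Hence $\nabla(A+B)|_{e_1}=0$ and $\nabla(A-B)|_{e_1}=-2C_T\lambda_0^k\lambda_2\nabla\lambda_1$. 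Taking the normal component and using $2C_T=8|T|^2/9$ gives the stated formula for $\partial_n v_{0,1}$ on $e_1$.

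On $e_2$, work on $T_2$, where $\lambda_0^{\rm R}=\lambda_0-\lambda_2$ and $\lambda_2-\lambda_1=-\lambda_1$ on $e_2$. Then
\[
\nabla A|_{e_2}=\lambda_0^k(-\nabla\lambda_2)(-\lambda_1)C_T=C_T\lambda_0^k\lambda_1\nabla\lambda_2,\qquad \nabla B|_{e_2}=C_T\lambda_0^k\lambda_1\nabla\lambda_2 .
\]
So $A-B$ has zero gradient on $e_2$, while $A+B$ has gradient $2C_T\lambda_0^k\lambda_1\nabla\lambda_2$. This matches the claimed $\delta_{2,i}$ formula.

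The vertex gradients now follow from the edge gradients. Every gradient computed above contains the factor $\lambda_0^k\lambda_2$ or $\lambda_0^k\lambda_1$, which vanishes at all three vertices. Since each vertex lies on one of the edges treated, $\nabla v_{0,j}$ vanishes at every vertex.

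The main obstacle is sign and index bookkeeping. One must use the correct piecewise formula for $\lambda_0^{\rm R}$ on each subtriangle, namely $\lambda_0-\lambda_1$ on $T_1$ and $\lambda_0-\lambda_2$ on $T_2$, and track the sign of $\lambda_2-\lambda_1$ on each edge. I would double-check these with \eqref{eq:lambdaRprop2}.
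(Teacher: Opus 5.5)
Your proof is correct and follows essentially the same route as the paper. You use the same splitting $v_{0,1}=A-B$, $v_{0,2}=A+B$, the fact that $\lambda_0^{\rm R}=\lambda_0$ on $e_1,e_2$ to kill the traces, and $\nabla\lambda_0^{\rm R}-\nabla\lambda_0=-\nabla\lambda_i$ on $T_i$ to get normal derivatives that cancel on one edge and double on the other. The only minor difference is that you read off the vertex gradients from your full edge-gradient formulas (factors $\lambda_0^k\lambda_1$, $\lambda_0^k\lambda_2$), whereas the paper argues via vanishing tangential derivatives of $A$ along $e_1$, $e_2$ and $[\texttt v_0,\texttt v_c]$; both arguments are valid.
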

\begin{proof}
Set
$$
u = C_T\lambda_0^k\lambda_1\lambda_2, \qquad
w=  \frac{C_T}{k+1}\bigl[(\lambda_0^{\rm R})^{k+1}-\lambda_0^{k+1}\bigr](\lambda_2-\lambda_1).
$$
Then
$$
v_{0,1} = w-u, \qquad v_{0,2} = w+u.
$$

Since the barycentric refinement only adds an interior point, we have $\lambda_0^{\rm R}|_{\partial T}=\lambda_0|_{\partial T}$. Hence $w|_{\partial T}=0$, and therefore
$$
v_{0,1}|_{\partial T}=v_{0,2}|_{\partial T}=0.
$$

The bubble term $u$ contains the factor $\lambda_0\lambda_1\lambda_2$, so both $u$ and $\nabla u$ vanish at the vertices. For $w$, the factor $\lambda_2-\lambda_1$ vanishes on $[\texttt v_0,\texttt v_c]$, and the factor $(\lambda_0^{\rm R})^{k+1}-\lambda_0^{k+1}$ vanishes on $\partial T$. Hence the tangential derivative of $w$ vanishes along $e_1$, $e_2$, and $[\texttt v_0,\texttt v_c]$. Since these edge directions are pairwise nonparallel, we obtain $\nabla w(\texttt v_0)=0$. The same argument gives $\nabla w=0$ at the other two vertices. Therefore
$$
\nabla v_{0,1}\big|_{\Delta_0(T)}=\nabla v_{0,2}\big|_{\Delta_0(T)}=0.
$$

We now compute the normal derivatives. Let $c_{i,i}=\nabla\lambda_i\cdot\boldsymbol n_i\neq 0$ for $i=0,1,2$. Since $\lambda_0^{\rm R}=\lambda_0$ on each edge $e_i$, we have
$$
\nabla w\big|_{e_i}
=
(\lambda_2-\lambda_1)|_{e_i}\, C_T\, \lambda_0^k\,
\bigl(\nabla \lambda_0^{\rm R}-\nabla \lambda_0\bigr).
$$
Using \eqref{eq:lambdaRprop2}, for $i=1,2$ we get
$$
\bigl(\nabla \lambda_0^{\rm R}-\nabla \lambda_0\bigr)|_{T_i}
=
-\,\nabla \lambda_i.
$$
Hence
$$
\partial_n w\big|_{e_i}
=
C_T c_{i,i}\, (\lambda_1-\lambda_2)|_{e_i}\, \lambda_0^k,
\qquad i=1,2.
$$

For $u=C_T\lambda_0^k\lambda_1\lambda_2$, since $k\ge 2$, $u$ vanishes to order at least two on $e_0$, and therefore
$$
\partial_n u\big|_{e_0}=0.
$$
On the other two edges,
$$
\partial_n u\big|_{e_1}=C_T\,c_{1,1}\,\lambda_0^{k}\lambda_2,
\qquad
\partial_n u\big|_{e_2}=C_T\,c_{2,2}\,\lambda_0^{k}\lambda_1.
$$
It follows that
\begin{equation*}
\partial_n v_{0,1} |_{e_{1}}=-2C_Tc_{1,1}\lambda_0^k\lambda_2,\quad
\partial_n v_{0,1} |_{e_{2}}=0,\quad
\partial_n v_{0,1} |_{e_{0}}=0.
\end{equation*}
Similarly,
\begin{equation*}
\partial_n v_{0,2} |_{e_{1}}=0,\quad
\partial_n v_{0,2} |_{e_{2}}=2C_Tc_{2,2}\lambda_0^k\lambda_1,\quad
\partial_n v_{0,2} |_{e_{0}}=0.
\end{equation*}
Since $C_T=4|T|^2/9$, the stated formulas follow.
\end{proof}

The definitions of $\{v_{1,2}, v_{1,0}, v_{2,0}, v_{2,1}\}$ are analogous, and the same trace and normal-derivative properties hold.

Moreover,
$$
\begin{aligned}
\lambda_0^k\lambda_1\lambda_2 &= \frac{9}{8|T|^2}\bigl(v_{0,2}-v_{0,1}\bigr),\\
v_{0} &= \frac{1}{2}\bigl(v_{0,1}+ v_{0,2}\bigr)
+\frac{4|T|^2}{9(k+1)}\lambda_0^{k+1}(\lambda_2-\lambda_1).
\end{aligned}
$$
Hence, when constructing a basis of $U_{k+2}(T)$, we may replace the set
$$
\{\lambda_0^k\lambda_1\lambda_2,\lambda_1^k\lambda_0\lambda_2,\lambda_2^k\lambda_0\lambda_1, v_0, v_1, v_2\}
$$
by the set
$$
\{v_{0,1}, v_{0,2}, v_{1,2}, v_{1,0}, v_{2,0}, v_{2,1}\}.
$$

Next we introduce the following subspaces of $U_{k+2}(T)$:
\begin{subequations}
\label{eq:HCTdecomposition}
\begin{align}
U_{\tt v}^0 &= \operatorname{span}\{\lambda_{\tt v}^{k+2}\},
&\quad &{\tt v}\in \Delta_0(T),\\[2pt]
U_{\tt v}^1 &= \operatorname{span}\{\lambda_{\tt v}^{k+1}\lambda_{{\tt v}^*(0)},
\ \lambda_{\tt v}^{k+1}\lambda_{{\tt v}^*(1)}\},
&\quad &{\tt v}\in \Delta_0(T),\\
U_e^0 &= b_e^2\,\mathbb P_{k-2}(e),
&\quad &e\in \Delta_1(T),\\[2pt]
U_e^1 &= b_e b_T\,\mathbb P_{k-3}(e)
\begin{aligned}[t]
&\oplus\ \operatorname{span}\{v_{e(0),e^*},\ v_{e(1),e^*}\},
\end{aligned}
&\quad &e\in \Delta_1(T),\\[2pt]
U_T^0 &= b_T^2\,\mathbb P_{k-4}(T).
\end{align}
\end{subequations}
Define
$$
U_{\Delta_\ell(T)}^m := \Oplus_{f\in \Delta_\ell(T)} U_f^m,
\qquad \ell=0,1,\quad m=0,1,
$$
where $\ell$ is the dimension of the subsimplex $f$, and $m$ indicates the order of the derivative.

Let $\tilde U_e^1 := b_e b_T\,\mathbb P_{k-3}(e)$ and
$$
U^{\rm red} := \operatorname{span}\{\lambda_0^k\lambda_1\lambda_2, \lambda_1^k\lambda_0\lambda_2, \lambda_2^k\lambda_0\lambda_1\}.
$$
By the geometric decomposition of the Lagrange element; see Fig.~\ref{fig:HCTenrichment}(B),
$$
\mathbb P_{k+2}(T) = \Oplus_{{\tt v}\in\Delta_0(T)}(U_{\tt v}^0 \oplus U_{\tt v}^1) \oplus \Oplus_{e\in\Delta_1(T)}(U_e^0 \oplus \tilde U_e^1)\oplus U^{\rm red} \oplus U_T^0.
$$
Therefore the following direct-sum decomposition holds.

\begin{lemma}
The following direct-sum decomposition holds:
\begin{equation}
\label{eq:HCTdec}
U_{k+2}(T) = U_T^0\oplus\Oplus_{\ell=0}^1\Oplus_{m=0}^1 U_{\Delta_\ell(T)}^m.
\end{equation}
\end{lemma}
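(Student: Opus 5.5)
The plan is to reduce \eqref{eq:HCTdec} to the Lagrange decomposition of $\mathbb P_{k+2}(T)$ stated just before the lemma, together with the directness of the enrichment. Since $U_{k+2}(T)=\mathbb P_{k+2}(T)\oplus\operatorname{span}\{v_0,v_1,v_2\}$ by definition, I substitute the Lagrange decomposition to write
\[
U_{k+2}(T)=\Oplus_{{\tt v}\in\Delta_0(T)}(U_{\tt v}^0\oplus U_{\tt v}^1)\oplus\Oplus_{e\in\Delta_1(T)}(U_e^0\oplus\tilde U_e^1)\oplus U_T^0\oplus\bigl(U^{\rm red}\oplus\operatorname{span}\{v_0,v_1,v_2\}\bigr).
\]
The whole task is then to identify the last summand with $\operatorname{span}\{v_{i,j}: i\ne j\}$. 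After that, regrouping gives $U_e^1=\tilde U_e^1\oplus\operatorname{span}\{v_{e(0),e^*},v_{e(1),e^*}\}$. Each ordered pair $(i,j)$, $i\neq j$, occurs exactly once as $(e(m),e^*)$ for the edge $e=\{i,j\}^*\cup\ldots$; concretely, $v_{i,j}$ is attached to the edge $e_j$, which contains $\texttt v_i$. This regrouping yields exactly \eqref{eq:HCTdec}.

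For the identification, I use the two relations displayed before the lemma, $\lambda_0^k\lambda_1\lambda_2=\tfrac{9}{8|T|^2}(v_{0,2}-v_{0,1})$ and $v_0=\tfrac12(v_{0,1}+v_{0,2})+\tfrac{4|T|^2}{9(k+1)}\lambda_0^{k+1}(\lambda_2-\lambda_1)$, and their cyclic analogues. The second relation contains a polynomial remainder $\lambda_0^{k+1}(\lambda_2-\lambda_1)\in\mathbb P_{k+2}(T)$, which is not in $U^{\rm red}$. So the claim is only that
\[
\mathbb P_{k+2}(T)+\operatorname{span}\{v_i\}=\bigl(\mathbb P_{k+2}\text{-part without }U^{\rm red}\bigr)\oplus\operatorname{span}\{v_{i,j}\},
\]
which is the sense in which the paper says the set may be replaced. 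I will argue this as follows. Each $v_{i,j}$ lies in $U_{k+2}(T)$, since $(\lambda_i^{\rm R})^{k+1}(\lambda_{i-1}-\lambda_{i+1})=(k+1)v_i/C_T$. The six functions $v_{i,j}$ together with the remaining Lagrange pieces span $U_{k+2}(T)$, by the two relations above and the fact that $\lambda_0^{k+1}(\lambda_2-\lambda_1)\in U_{\texttt v_0}^1$. A dimension count then gives directness: the number of generators equals $\dim\mathbb P_{k+2}(T)+3$, and this equals $\dim U_{k+2}(T)$ because the enrichment is direct.

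That last fact must still be proved, and I expect it to be the main obstacle, since Lemma~\ref{lem:geodecompSigmak} only gives directness for the stresses $\bs\psi_i$. I would transfer it through $J$. Suppose $p+\sum c_iv_i=0$ with $p\in\mathbb P_{k+2}(T)$. Applying $J$ and using Lemma~\ref{lem:characterization2d} gives $J(p)+\sum c_i\bs\psi_i=0$ with $J(p)\in\mathbb P_k(T;\mathbb S)$. Lemma~\ref{lem:geodecompSigmak} then forces $c_i=0$, and hence $p=0$.

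Alternatively, directness can be checked directly without the $J$ argument. Restrict to a vertex neighbourhood and compare the second derivatives of $v_0$ at $\texttt v_0$ from $T_1$ and $T_2$; the jump is nonzero only for $v_0$. I will use the $J$ argument as the primary route, since it relies only on results already established.
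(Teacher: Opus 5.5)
Your proposal is correct and follows the paper's own route. You substitute the Lagrange (Bernstein) decomposition of $\mathbb P_{k+2}(T)$ and trade $U^{\rm red}\oplus\operatorname{span}\{v_0,v_1,v_2\}$ for the six $v_{i,j}$ via the two displayed relations, absorbing the polynomial remainders $\lambda_i^{k+1}(\lambda_{i-1}-\lambda_{i+1})$ into the vertex spaces $U^1_{\tt v}$; your dimension count and your $J$-argument for the directness of $\mathbb P_{k+2}(T)\oplus\operatorname{span}\{v_0,v_1,v_2\}$ (via Lemma~\ref{lem:characterization2d} and Lemma~\ref{lem:geodecompSigmak}, valid here since $k\ge 2$) just make explicit what the paper's one-line ``therefore'' takes for granted.
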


We now define the degrees of freedom for $U_{k+2,h}$.

\begin{theorem}\label{th:Uk+2}
For $k\ge 2$, a function $v\in U_{k+2}(T)$ is uniquely determined by the following degrees of freedom:
\begin{subequations}\label{HCTfeDoF}
\begin{align}
v(\mathtt v),
&\quad  \mathtt v \in \Delta_0(T), \label{HCTfeDoF1}\\
\nabla v(\mathtt v),
&\quad  \mathtt v \in \Delta_0(T), \label{HCTfeDoF2}\\
\int_e v\, q \,\dd s,
&\quad  q \in \mathbb P_{k-2}(e),\ e \in \Delta_1(T), \label{HCTfeDoF3}\\
\int_e \frac{\partial v}{\partial n}\, q \,\dd s,
&\quad  q \in \mathbb P_{k-1}(e),\ e \in \Delta_1(T), \label{HCTfeDoF4}\\
\int_T v\, q \,\dd x,
&\quad  q \in \mathbb P_{k-4}(T). \label{HCTfeDoF5}
\end{align}
\end{subequations}
\end{theorem}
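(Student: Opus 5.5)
The proof has two parts: a dimension count based on the decomposition \eqref{eq:HCTdec}, and a uniqueness argument that peels off the degrees of freedom in the order vertices, edge traces, edge normal derivatives, interior.

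\emph{Dimension count.} By Lemma~\ref{lem:geodecompSigmak}'s analogue \eqref{eq:HCTdec}, $\dim U_{k+2}(T)=\dim\mathbb P_{k+2}(T)+3$. The pieces contribute as follows:
\begin{itemize}
\item $U^0_{\Delta_0}$ and $U^1_{\Delta_0}$ give $3+6$;
\item $U_e^0$ gives $k-1$ per edge;
\item $U_e^1$ gives $(k-2)+2=k$ per edge;
\item $U_T^0$ gives $\dim\mathbb P_{k-4}(T)$.
\end{itemize}
This matches the count of \eqref{HCTfeDoF1}--\eqref{HCTfeDoF5} exactly: $3$ vertex values, $6$ gradient components, $3(k-1)$ edge moments \eqref{HCTfeDoF3}, $3k$ normal-derivative moments \eqref{HCTfeDoF4}, and $\dim\mathbb P_{k-4}(T)$ interior moments. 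It therefore suffices to show that a $v\in U_{k+2}(T)$ with all degrees of freedom zero vanishes.

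\emph{Trace.} Since $\lambda_i^{\rm R}=\lambda_i$ on $\partial T$, every $v_i$ restricted to an edge $e$ is a polynomial of degree $k+2$ on $e$. Hence $v|_e\in\mathbb P_{k+2}(e)$. Vanishing of $v$ and of $\partial_t v$ at both endpoints, together with \eqref{HCTfeDoF3}, forces $v|_e=0$. This is the standard Hermite/Lagrange argument: $v|_e=b_e^2q$ with $q\in\mathbb P_{k-2}(e)$, and testing against $q$ gives $q=0$. So $v|_{\partial T}=0$ and $\nabla v$ vanishes at the vertices.

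\emph{Normal derivative.} Use the decomposition \eqref{eq:HCTdec} and write $v$ in terms of its components. The components in $U_{\tt v}^0$, $U_{\tt v}^1$ and $U_e^0$ have a trace or vertex gradient that cannot be cancelled by the other components, which vanish to first order at the vertices and have zero trace. Therefore these components vanish and $v\in U^1_{\Delta_1}\oplus U_T^0$.

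Now fix an edge $e$. By the preceding lemma on $v_{0,1},v_{0,2}$ (and its analogues), a function in $U_{e'}^1$ has vanishing normal derivative on every edge $e\neq e'$; for $b_{e'}b_T\mathbb P_{k-3}$ this holds because of the double zero on the other edges. The component in $U_T^0$ also has vanishing normal derivative on $\partial T$. Hence $\partial_n v|_e$ comes only from the $U_e^1$ component. On $e=e_0$, for instance, this gives
\[
\partial_n v|_{e}=c\,\lambda_1\lambda_2\,p+\alpha\,\lambda_1^k\lambda_2+\beta\,\lambda_2^k\lambda_1,
\qquad p\in\mathbb P_{k-3}(e),
\]
with $c\ne0$ a constant coming from $\nabla(\lambda_0)\cdot\boldsymbol n$. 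The vertex-attached pieces are the normal derivatives $\lambda_{e(0)}^k\lambda_{e(1)}$ and $\lambda_{e(1)}^k\lambda_{e(0)}$ of $v_{e(0),e^*}$ and $v_{e(1),e^*}$.

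The key check, and the main obstacle, is that the map from the coefficients of $U_e^1$ to the normal-derivative trace on $e$ is injective onto $\lambda_1\lambda_2\mathbb P_{k-2}(e)$. Dividing by $\lambda_1\lambda_2$ gives $cp+\alpha\lambda_1^{k-1}+\beta\lambda_2^{k-1}$. The family $\{\lambda_1^{k-1},\lambda_2^{k-1}\}\cup\mathbb P_{k-3}\cdot\lambda_1\lambda_2$ spans $\mathbb P_{k-1}(e)$; I would verify this via the Bernstein basis, where the two pure powers are the end Bernstein polynomials and $\lambda_1\lambda_2\mathbb P_{k-3}$ spans the interior ones. Hence $\partial_nv|_e\in\lambda_1\lambda_2\mathbb P_{k-1}(e)$, injectively. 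The moments \eqref{HCTfeDoF4} against $q\in\mathbb P_{k-1}(e)$, taking $q$ equal to the cofactor, then give $\partial_nv|_e=0$, and so the $U_e^1$ component vanishes.

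\emph{Interior.} Finally $v=b_T^2q$ with $q\in\mathbb P_{k-4}(T)$. Testing \eqref{HCTfeDoF5} with this $q$ gives $\int_T b_T^2q^2\,\dd x=0$, so $q=0$ and $v=0$.
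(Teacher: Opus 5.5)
Your proposal is correct and follows the paper's argument: the dimension count plus elimination in the order vertices, edge traces, edge normal derivatives, interior is exactly the block lower-triangular structure \eqref{eq:lowertriangular-new}, and your Bernstein-basis check is what justifies \eqref{eq:uepartial_ndof}, i.e.\ the invertibility of the $U_e^1$ diagonal block. One slip to fix: on $e_0$ one has $b_{e_0}b_T=\lambda_0\lambda_1^2\lambda_2^2$, so that piece contributes $c\,\lambda_1^2\lambda_2^2p$ (not $c\,\lambda_1\lambda_2p$) to $\partial_n v|_{e_0}$, and the image is $\lambda_1\lambda_2\mathbb P_{k-1}(e)$ rather than $\lambda_1\lambda_2\mathbb P_{k-2}(e)$ --- this is what your spanning claim with $\lambda_1\lambda_2\mathbb P_{k-3}$ actually uses.
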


\begin{proof}
The total number of DoFs in \eqref{HCTfeDoF} is
$$
3 \;+\; 6 \;+\; 3(k-1) \;+\; 3k \;+\; \frac{(k-2)(k-3)}{2}
= \frac{(k+4)(k+3)}{2}+3,
$$
which agrees with $\dim U_{k+2}(T)$.

Let $D_{\Delta_0(T)}^0$, $D_{\Delta_0(T)}^1$, $D_{\Delta_1(T)}^0$, $D_{\Delta_1(T)}^1$, and $D_T^0$ denote the collections of DoFs in \eqref{HCTfeDoF1}--\eqref{HCTfeDoF5}, respectively. For $U_e^1$, by the definition of $v_{i,j}$,
\begin{equation}\label{eq:uepartial_ndof}
\frac{\partial}{\partial n}\,U_e^1\big|_{e'}
= b_e\,\mathbb P_{k-1}(e)\,\delta_{e,e'},
\qquad \forall\, e,e'\in\Delta_1(T).
\end{equation}

Applying the functionals $D_{\Delta_0(T)}^0,\dots,D_T^0$ to each subspace in \eqref{eq:HCTdecomposition}, and using the standard properties of Bernstein polynomials together with \eqref{eq:uepartial_ndof}, we obtain the following block incidence pattern:
\begin{equation}\label{eq:lowertriangular-new}
\renewcommand{\arraystretch}{1.25}
\begin{array}{c|*{5}{c}}
\displaystyle D \;\backslash\; U
& U_{\Delta_0(T)}^0
& U_{\Delta_0(T)}^1
& U_{\Delta_1(T)}^0
& U_{\Delta_1(T)}^1
& U_T^0 \\ \hline
D_{\Delta_0(T)}^0
& \graysquare & 0 & 0 & 0 & 0 \\
D_{\Delta_0(T)}^1
& \graysquare & \graysquare & 0 & 0 & 0 \\
D_{\Delta_1(T)}^0
& \graysquare & \graysquare & \graysquare & 0 & 0 \\
D_{\Delta_1(T)}^1
& \graysquare & \graysquare & \graysquare & \graysquare & 0 \\
D_T^0
& \graysquare & \graysquare & \graysquare & \graysquare & \graysquare
\end{array}
\end{equation}
Thus the DoF-basis matrix is block lower triangular. Each diagonal block is the restriction of the matching DoFs to the matching subspace,
$$
\langle D_f^m, U_f^m \rangle, \quad f\in \Delta_\ell(T),\ \ell=0,1,\ m=0,1.
$$
By the construction of the subspaces in \eqref{eq:HCTdecomposition}, each diagonal block is invertible. Hence the full block system is invertible, which proves unisolvence.
\end{proof}

We define the global finite element space on the triangulation $\mathcal T_h$ by
$$
\begin{aligned}
U_{k+2,h}
:= \{v\in L^2(\Omega):\ & v|_T\in U_{k+2}(T),\ \forall\, T\in\mathcal T_h,\\
&\text{and the DoFs \eqref{HCTfeDoF1}--\eqref{HCTfeDoF4} are single-valued}\}.
\end{aligned}
$$
For $e\in\Delta_1(T)$, we have $U_{k+2}(T)|_e\subseteq \mathbb P_{k+2}(e)$ and $\partial_nU_{k+2}(T)|_e\subseteq \mathbb P_{k+1}(e)$. Since $\mathbb P_{k+2}(e)$ is determined by the restricted DoFs \eqref{HCTfeDoF1}--\eqref{HCTfeDoF3}, and $\mathbb P_{k+1}(e)$ is determined by the restricted DoFs \eqref{HCTfeDoF2} and \eqref{HCTfeDoF4}, it follows that $U_{k+2,h}\subset C^1(\Omega)$.

\subsection{Low-order $C^1$ elements}

For $k\ge2$, we have constructed a family of $C^1$ elements of degree $k+2$. We now consider the low-order cases $k=1$ and $k=0$.

For $k=1$, the six functions $\{v_{0,1},v_{0,2},v_{1,2},v_{1,0},v_{2,0},v_{2,1}\}$ are linearly dependent. Indeed, recalling that $b_T=\lambda_0\lambda_1\lambda_2$, direct calculation gives
\begin{equation}\label{eq:vdependent}
v_{0,2}-v_{0,1}
=
v_{1,0}-v_{1,2}
=
v_{2,1}-v_{2,0}
=
2C_T\,b_T,
\end{equation}
so only three independent edge enrichments are needed.

Let $e_i=\{i\}^*$ with unit normal $\boldsymbol n_i$, and set
$c_{i,i}:=\nabla\lambda_i\cdot\boldsymbol n_i$, for $i=0, 1, 2$.
Define
\begin{equation}\label{eq:wbasis}
w_i:=\frac{1}{4C_T c_{i,i}}\bigl(v_{i+1,i}-v_{i-1,i}\bigr),
\qquad i=0,1,2 \pmod 3.
\end{equation}
Then
\[
w_i|_{\partial T}=0,
\qquad
\nabla w_i|_{\Delta_0(T)}=0,
\]
and
\[
\frac{\partial w_i}{\partial n}\Big|_{e_j}
=
\delta_{i,j}\,b_{e_i},
\qquad i, j=0,1,2.
\]
Hence
\[
\int_{e_j}\frac{\partial w_i}{\partial n}\,ds
=
\delta_{i,j}\int_{e_i} b_{e_i}\,ds
=
\delta_{i,j}\,\frac{|e_i|}{6},
\]
and $\{w_0,w_1,w_2\}$ is a scaled dual basis for the edge moments $\int_{e_i}\partial_n v\,ds$.

The following unisolvence result is a corollary of Theorem~\ref{th:Uk+2}.

\begin{corollary}
Let $U_3(T):=\mathbb P_3(T)+\operatorname{span}\{w_0,w_1,w_2\}$. Then
\[
U_3(T)=\Oplus_{{\tt v}\in \Delta_0(T)}(U_{\tt v}^0\oplus U_{\tt v}^1)\oplus W^1,
\qquad
W^1:=\operatorname{span}\{w_0,w_1,w_2\}.
\]
In particular, $\dim U_3(T)=12$. Moreover, any $v\in U_3(T)$ is uniquely determined by the DoFs \eqref{HCTfeDoF1}, \eqref{HCTfeDoF2}, and \eqref{HCTfeDoF4}.
\end{corollary}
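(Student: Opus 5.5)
The plan mirrors the proof of Theorem~\ref{th:Uk+2}, specialized to $k=1$. Two cases are special here. The edge spaces $U_e^0=b_e^2\mathbb P_{k-2}(e)$ and the interior space $U_T^0$ are empty, and the red lattice space $U^{\rm red}$ collapses to $\operatorname{span}\{b_T\}$. We work with the six functions $v_{i,j}$ at $k=1$, so that $C_T/(k+1)=C_T/2$.

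\textbf{Step 1: the vertex part of $\mathbb P_3(T)$.} By the Bernstein/Lagrange geometric decomposition,
\[
\mathbb P_3(T)=\Oplus_{{\tt v}\in\Delta_0(T)}(U_{\tt v}^0\oplus U_{\tt v}^1)\oplus\operatorname{span}\{b_T\}.
\]
Here $U_{\tt v}^0=\operatorname{span}\{\lambda_{\tt v}^3\}$ and $U_{\tt v}^1=\operatorname{span}\{\lambda_{\tt v}^2\lambda_{{\tt v}^*(0)},\lambda_{\tt v}^2\lambda_{{\tt v}^*(1)}\}$. This already accounts for nine of the ten cubic Bernstein polynomials.

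\textbf{Step 2: $b_T$ lies in $W^1$.} Summing \eqref{eq:wbasis} over $i$ and using \eqref{eq:vdependent}, I will show $b_T\in W^1$. Write $v_{i,j}=w^{(i)}\mp C_T\lambda_ib_{e_i}$, with the same part $w^{(i)}$ for both $v_{i,i+1}$ and $v_{i,i-1}$. Then each difference $v_{i+1,i}-v_{i-1,i}$ is a combination of the parts $w^{(j)}$ and of the terms $\lambda_j\lambda_0\lambda_1\lambda_2/\lambda_j$. In this form it is simpler to compute directly:
\[
w_i=\frac{1}{4C_Tc_{i,i}}\bigl(w^{(i+1)}-w^{(i-1)}\bigr)+\text{(multiple of $b_T$)}.
\]
The pieces $w^{(j)}$ are non-polynomial. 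Each one is supported near the interior edge $[\texttt v_j,\texttt v_c]$, with a kink across that edge; it is there that $(\lambda_j^{\rm R})^2-\lambda_j^2$ fails to be smooth. Evaluating the jump of the second normal derivative across the three interior edges then decides which combinations of the $w_i$ are polynomial. This is cleaner done via normal derivatives, so instead of chasing $b_T$ directly I will prove directness and count dimensions.

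\textbf{Step 3: directness, dimension, and unisolvence.} Suppose $p+\sum_i a_iw_i=0$ with $p\in\mathbb P_3(T)$. Apply $\partial_n$ on $e_j$ and use $\partial_n w_i|_{e_j}=\delta_{ij}b_{e_i}$. This gives $\partial_n p|_{e_j}=-a_jb_{e_j}$ for each $j$, and also $p|_{\partial T}=0$ and $\nabla p|_{\Delta_0}=0$. Hence $p=cb_T$. But $\partial_n b_T|_{e_j}=c_{j,j}b_{e_j}$, which forces $a_j=-cc_{j,j}$. So the dependence is at most one-dimensional, namely $b_T=-\sum_j c_{j,j}w_j$ up to sign.

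The remaining question is whether $\sum_j c_{j,j}w_j$ is a polynomial. This is where \eqref{eq:vdependent} enters, and it is the main obstacle. I would verify it through the jumps: each $w^{(j)}$ appears in the sum with coefficient $\frac{1}{4C_T}(1-1)=0$. Hence $\sum_j c_{j,j}w_j\in\mathbb P_3$, and by the argument above it equals $-b_T$. It follows that $U_3(T)=\bigoplus(U_{\tt v}^0\oplus U_{\tt v}^1)\oplus W^1$ with $\dim U_3(T)=9+3=12$.

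Unisolvence then follows exactly as in the block lower-triangular argument \eqref{eq:lowertriangular-new}, now with only the blocks $D^0_{\Delta_0},D^1_{\Delta_0}$ and the edge normal moments $\int_e\partial_nv$. The vertex DoFs annihilate $W^1$. The normal moments act diagonally on $W^1$ with entries $|e_i|/6\neq0$. The vertex blocks are the standard cubic Hermite ones.
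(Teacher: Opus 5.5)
Your strategy is sound and is essentially the paper's argument in a different order. The paper first proves that $W=\sum_{\tt v}(U_{\tt v}^0+U_{\tt v}^1)+W^1$ is direct via the block lower-triangular DoF matrix, then uses \eqref{eq:vdependent} to get $\sum_i c_{i,i}w_i=\tfrac32 b_T$. You instead bound the relations $p+\sum_i a_iw_i=0$ by trace arguments and then exhibit the single relation by the same cancellation. Two steps, however, are wrong as written.

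First, the cancellation does not give $-b_T$. With your splitting $v_{j\pm1,j}=w^{(j\pm1)}\pm C_Tb_T$, one gets exactly
\[
\sum_j c_{j,j}w_j=\frac{1}{4C_T}\sum_j\bigl(w^{(j+1)}-w^{(j-1)}+2C_Tb_T\bigr)=\frac32\,b_T ,
\]
which is the paper's identity. Only the nonvanishing of this coefficient is used, so $b_T\in W^1$ and the count $10+3-1=12$ are unaffected. Still, you should read the coefficient off this computation rather than from ``the argument above''.

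Second, the value $\tfrac32$ contradicts the formula $\partial_n w_i|_{e_j}=\delta_{i,j}b_{e_i}$ that you use, since that formula would force the coefficient to be $1$. The formula is asserted in the text before the corollary, but it is false for $k=1$. The bubble term of $v_{i,i\pm1}$ is $C_T\lambda_i b_{e_i}=C_Tb_T$, which vanishes only to first order on $e_i$; the proof of the $k\ge2$ lemma uses $k\ge2$ exactly at this point. Hence $\partial_n v_{i,i\pm1}|_{e_i}=\mp C_Tc_{i,i}b_{e_i}\neq0$, and consequently $\partial_n w_i|_{e_j}=\frac{c_{j,j}}{4c_{i,i}}\,b_{e_j}$ for $j\neq i$. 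So your deduction $a_j=-c\,c_{j,j}$, and your claim that the normal moments act diagonally on $W^1$, are both incorrect.

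The repair is as follows. The matrix $\bigl(\int_{e_j}\partial_n w_i\dd s\bigr)_{j,i}$ equals $\frac16\operatorname{diag}(|e_j|)\,D\bigl(\tfrac34 I+\tfrac14\mathbf{1}\mathbf{1}^{\intercal}\bigr)D^{-1}$ with $D=\operatorname{diag}(c_{j,j})$. The middle factor has eigenvalues $\tfrac34,\tfrac34,\tfrac32$, so this block is invertible. It follows that $(a_j/c_{j,j})_j=-\tfrac23 c\,\mathbf{1}$, which is consistent with $\tfrac32$. The relation space is therefore still at most one-dimensional, and the $W^1$ diagonal block in your unisolvence argument is invertible. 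The paper's ``by construction \dots\ invertible diagonal blocks'' tacitly relies on this same fact.
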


\begin{proof}
Let
\[
W:=\sum_{{\tt v}\in \Delta_0(T)}(U_{\tt v}^0+U_{\tt v}^1)+W^1 \subseteq U_3(T).
\]
By construction, the restriction of the DoFs to $W$ gives a block lower triangular system with invertible diagonal blocks. Hence the sum defining $W$ is direct, that is,
\[
W = \Oplus_{{\tt v}\in \Delta_0(T)}(U_{\tt v}^0\oplus U_{\tt v}^1)\oplus W^1.
\]
Therefore $\dim W=12$.

Using \eqref{eq:vdependent} and the definitions of $w_i$, we obtain
\[
\sum_{i=0}^2 c_{i,i}w_i
=\frac{1}{4C_T}\Bigl[(v_{1,0}-v_{2,0})+(v_{2,1}-v_{0,1})+(v_{0,2}-v_{1,2})\Bigr]
=\frac32\,b_T.
\]
This shows that $U_3(T)\subseteq W$. Since both spaces have dimension $12$, we conclude that $U_3(T)=W$.
\end{proof}

We further restrict the edge traces of the normal derivative and define
\[
U_2(T)
:=
\bigl\{
v\in U_3(T):
\ \partial_n v|_{e}\in \mathbb P_1(e),
\ \forall\, e\in\Delta_1(T)
\bigr\}.
\]
Then $\dim U_2(T)=9$ and $\mathbb P_2(T)\subset U_2(T)$. Moreover, it admits the hierarchical decomposition
\[
U_2(T)
=
\mathbb P_2(T)
\oplus
\operatorname{span}\{u_0,u_1,u_2\}.
\]
The functions $u_i$ are obtained from cubic seed functions by removing the quadratic part of their normal derivatives using $w_i$. For example, on $e_0$ we set
\[
s_0:=\lambda_1\lambda_2(\lambda_1-\lambda_2).
\]
A direct calculation gives
$$
\begin{aligned}
\partial_n s_0\big|_{e_0}
&=
3(c_{1,0}-c_{2,0})\lambda_1\lambda_2
+
(c_{2,0}\lambda_1-c_{1,0}\lambda_2),\\
\partial_n s_0\big|_{e_1}
&=
-c_{1,1}\lambda_2^2, \qquad
\partial_n s_0\big|_{e_2}
=
c_{2,2}\lambda_1^2,
\end{aligned}
$$
where $c_{i, j}=\nabla\lambda_i\cdot\boldsymbol n_j$ for $i, j=0,1,2$.
Since $(\lambda_2^2+\lambda_{0}\lambda_2)\big|_{e_1} = \lambda_2\big|_{e_1}$ and $(\lambda_1^2+\lambda_{0}\lambda_1)\big|_{e_2} = \lambda_1\big|_{e_2}$. Using the properties of $w_i$, we define
\begin{equation}
\label{eq:u0def}
u_0:=s_0-3(c_{1,0}-c_{2,0})\,w_0 - c_{1,1}\,w_1 + c_{2,2}\,w_2,
\end{equation}
so that $\partial_n u_0\big|_{e_j}\in\mathbb P_1(e_j)$ for $j=0,1,2$. The functions $u_1$ and $u_2$ are defined analogously.

To see that the sum is direct, suppose that $v\in\mathbb P_2(T)$ and $c_0,c_1,c_2\in\mathbb R$ satisfy
\[
v+c_0u_0+c_1u_1+c_2u_2=0.
\]
Restricting to $e_i$ and using $v|_{e_i}\in\mathbb P_2(e_i)$ together with $u_j|_{e_i}\in \delta_{i,j}\, b_{e_i}\mathbb P_1(e_i)$, we obtain $c_i=0$ for $i=0,1,2$, and hence $v=0$. Therefore
\[
\mathbb P_2(T)\oplus \operatorname{span}\{u_0,u_1,u_2\}
\]
is a direct sum. In particular, $\dim U_2(T)=9$.

\begin{lemma}
A function $v\in U_2(T)$ is uniquely determined by the vertex DoFs \eqref{HCTfeDoF1}--\eqref{HCTfeDoF2}.
\end{lemma}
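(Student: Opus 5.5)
The plan is a dimension count followed by a uniqueness argument. The vertex DoFs \eqref{HCTfeDoF1}--\eqref{HCTfeDoF2} number $3+6=9$, which equals $\dim U_2(T)=9$ established above. So it suffices to show: if $v\in U_2(T)$ and $v(\texttt v)=0$, $\nabla v(\texttt v)=0$ for every $\texttt v\in\Delta_0(T)$, then $v=0$.

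\textbf{Step 1: the trace on $\partial T$.} Since $U_2(T)\subset U_3(T)=\Oplus_{\tt v}(U_{\tt v}^0\oplus U_{\tt v}^1)\oplus W^1$ and each $w_i$ vanishes on $\partial T$, we have $v|_{e}\in\mathbb P_3(e)$ on each edge $e$. The endpoint values and tangential derivatives of $v|_e$ vanish, so Hermite cubic interpolation gives $v|_e=0$ on every edge.

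\textbf{Step 2: the normal derivative.} By the definition of $U_2(T)$, $\partial_n v|_e\in\mathbb P_1(e)$. Its endpoint values are $\nabla v(\texttt v)\cdot\boldsymbol n_e=0$, so $\partial_n v|_e=0$ on each edge. Hence all DoFs \eqref{HCTfeDoF4} of $v$, viewed as an element of $U_3(T)$, vanish. The vertex DoFs \eqref{HCTfeDoF1}--\eqref{HCTfeDoF2} vanish by assumption.

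\textbf{Step 3: conclusion.} The corollary on $U_3(T)$ states that an element of $U_3(T)$ is uniquely determined by \eqref{HCTfeDoF1}, \eqref{HCTfeDoF2}, and \eqref{HCTfeDoF4}. Therefore $v=0$, which proves unisolvence.

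The only delicate point is Step 1: one must check that traces of $U_3(T)$ on edges are genuinely cubic polynomials. This holds because $w_i|_{\partial T}=0$ by construction and the remaining summands are polynomials of degree $3$. Once that is granted, the argument reduces to the earlier corollary.
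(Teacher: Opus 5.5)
Your proposal is correct and matches the paper's proof essentially step for step: the count $9=\dim U_2(T)$, then $v|_e=0$ because the trace is a cubic with vanishing endpoint values and tangential derivatives, then $\partial_n v|_e=0$ because it is linear and vanishes at the endpoints, and finally the unisolvence of $U_3(T)$ under \eqref{HCTfeDoF1}, \eqref{HCTfeDoF2}, \eqref{HCTfeDoF4}. You also state explicitly that $w_i|_{\partial T}=0$ is what makes the edge traces of $U_3(T)$ cubic, a detail the paper leaves implicit.
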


\begin{proof}
The number of DoFs equals $\dim U_2(T)$. If all vertex DoFs of $v\in U_2(T)$ vanish, then on each edge $v|_{e}$ is cubic with zero value and zero tangential derivative at both endpoints. Hence $v|_{e}=0$. Since $\partial_n v|_{e}\in\mathbb P_1(e)$ and vanishes at the two endpoints, it follows that $\partial_n v|_{e}=0$. Thus both $v$ and $\partial_n v$ vanish on $\partial T$. Because $v\in U_3(T)$, the unisolvence of $U_3(T)$ implies $v\equiv0$ on $T$.
\end{proof}

For a triangulation $\mathcal T_h$, we define the global spaces
\[
\begin{aligned}
U_{3,h}
&:=
\{v\in L^2(\Omega):\ v|_T\in U_3(T)\ \forall\,T\in\mathcal T_h,
\text{ with single-valued DoFs}\},\\
U_{2,h}
&:=
\{v\in L^2(\Omega):\ v|_T\in U_2(T)\ \forall\,T\in\mathcal T_h,
\text{ with single-valued vertex DoFs}\}.
\end{aligned}
\]

\begin{lemma}
For $k=0,1$, the space $U_{k+2,h}$ is a subspace of $C^1(\Omega)$.
\end{lemma}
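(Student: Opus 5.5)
The plan is to reduce $C^1$ continuity across each interior edge $e=T\cap T'$ to two facts: the trace $v|_e$ and the normal derivative $\partial_n v|_e$ are polynomials on $e$ determined solely by DoFs that are shared by $T$ and $T'$.

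Inside each macro element, $U_3(T)$ and $U_2(T)\subset U_3(T)$ are already $C^1(T)$. Indeed, $\mathbb P_3(T)$ is smooth, and each $w_i$ is a linear combination of the $v_{i,j}$. Each $v_{i,j}$ is a linear combination of $v_i$ (in $C^1(T)$ by Lemma~\ref{lem:characterization2d}) and polynomials. So only interelement continuity needs to be checked.

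\emph{Traces.} For $v\in U_3(T)$, the trace $v|_e$ is a cubic on $e$. Polynomials restrict to cubics, and $w_i|_{\partial T}=0$. By the vertex DoFs \eqref{HCTfeDoF1}--\eqref{HCTfeDoF2}, the endpoint values and tangential derivatives of $v$ on $e$ are single-valued. These four data determine a cubic on $e$ uniquely, so $v$ is continuous across $e$. The same argument applies verbatim to $U_2(T)$.

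\emph{Normal derivatives.} For $v\in U_3(T)$, the normal derivative $\partial_n v|_e\in\mathbb P_2(e)$. The cubic part contributes a quadratic, and $\partial_n w_i|_{e_j}=\delta_{i,j}b_{e_i}$ is quadratic. A quadratic on $e$ is determined by its two endpoint values together with $\int_e\partial_n v\,\dd s$ (DoF \eqref{HCTfeDoF4} with $k=1$, $q\in\mathbb P_0(e)$). Because $\nabla v$ is single-valued at the vertices, the endpoint values of $\partial_n v$ agree from both sides, up to the common orientation of $\boldsymbol n_e$. The edge moment is single-valued by definition of $U_{3,h}$. Hence $\partial_n v$ is continuous across $e$.

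For $U_2(T)$, the definition imposes $\partial_n v|_e\in\mathbb P_1(e)$. This linear function is fixed by its endpoint values, which come from the single-valued gradients at the vertices. Consequently both $v$ and $\nabla v$ are continuous across every interior edge. Together with piecewise $C^1$ regularity, this gives $U_{k+2,h}\subset C^1(\Omega)$ for $k=0,1$.

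The main obstacle is confirming that $\partial_n v|_e$ really has degree at most $2$ on each boundary edge $e$ for the enrichments, i.e.\ that the $w_i$ (built from nonpolynomial $v_i$) have polynomial normal traces of the stated degree. This follows from the earlier lemma on $v_{0,1},v_{0,2}$ with $k=1$ and the identity $\partial_n w_i|_{e_j}=\delta_{i,j}b_{e_i}$. I would cite that explicitly rather than recompute it.
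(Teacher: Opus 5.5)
Your proof is correct and matches the paper's approach. The paper gives no separate proof for $k=0,1$, but your argument is exactly the one it gives for $k\ge 2$ right after defining $U_{k+2,h}$: edge traces are cubics fixed by the shared vertex values and gradients, and normal traces are fixed by the shared vertex gradients (plus the edge mean for $U_3$, alone for $U_2$). One caution on the degree bound: the lemma on $v_{0,1},v_{0,2}$ is stated for $k\ge 2$ and does not transfer to $k=1$, because then $\partial_n(\lambda_0\lambda_1\lambda_2)|_{e_0}\neq 0$ and in fact $\partial_n w_i|_{e_j}$ is a nonzero multiple of $b_{e_j}$ for $j\neq i$ (consistent with the paper's identity $\sum_i c_{i,i}w_i=\tfrac32 b_T$). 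So do not lean on $\partial_n w_i|_{e_j}=\delta_{i,j}b_{e_i}$; instead note that every function in $U_3(T)$ is a cubic polynomial on the subtriangle $T_j$ adjacent to $e_j$, since each $\lambda_i^{\rm R}$ is linear there, which gives $v|_{e_j}\in\mathbb P_3(e_j)$ and $\partial_n v|_{e_j}\in\mathbb P_2(e_j)$ directly.
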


There are other choices of bases for $U_{k+2}(T)$. For implementation, a basis dual to the DoFs is more convenient. Such a basis can be constructed by the procedure in~\cite{ChenChenWeiothers2023Geometric}.

\subsection{Bases dual to Degrees of Freedom}
For completeness, we give explicit bases for the low-order cases $U_2$ and $U_3$ which is dual to a slightly modified DoFs.

For $e\in\Delta_1(T)$, define a linear functional $D_e:C(e)\to\mathbb{R}$ by
\[
D_e(v):=\frac{6}{|e|}\int_e v\,\dd s - 2\bigl(v(e(0))+v(e(1))\bigr).
\]
If $v\in\mathbb P_2(e)$ is written in Bernstein form as
$v=a\lambda_{e(0)}^2+b\lambda_{e(0)}\lambda_{e(1)}+c\lambda_{e(1)}^2$,
then $D_e(v)=b$.
We replace the edge moment DoF \eqref{HCTfeDoF4} for $U_3(T)$ by
\begin{equation}\label{HCTfeDoFpn}
D_e(\partial_n v|_e), \quad e\in\Delta_1(T).
\end{equation}
Since $\partial_n v|_e\in\mathbb P_2(e)$ and its endpoint values are already determined by the vertex gradient DoFs, the modified DoFs \eqref{HCTfeDoF1}-\eqref{HCTfeDoF2} and \eqref{HCTfeDoFpn} remain unisolvent for $U_3(T)$, and the $C^1$ conformity is preserved.

Second, for $i=0,1,2$, with indices understood modulo $3$, we replace the vertex gradient DoF \eqref{HCTfeDoF2} by the directional derivatives
\begin{equation}\label{HCTfeDoFpt}
\frac{\partial v}{\partial \boldsymbol t_{i,i-1}}(\mathtt v_i),\quad
\frac{\partial v}{\partial \boldsymbol t_{i,i+1}}(\mathtt v_i).
\end{equation}
Let
\[
M_i=\begin{pmatrix}\boldsymbol t_{i,i-1} & \boldsymbol t_{i,i+1}\end{pmatrix}.
\]
Then
\[
\Bigl(
\frac{\partial v}{\partial \boldsymbol t_{i,i-1}}(\mathtt v_i),\,
\frac{\partial v}{\partial \boldsymbol t_{i,i+1}}(\mathtt v_i)
\Bigr)
= \nabla v(\mathtt v_i)^{\intercal} M_i.
\]
If $\phi_{i,i-1}$ and $\phi_{i,i+1}$ are dual to the DoFs \eqref{HCTfeDoFpt}, then the dual basis for \eqref{HCTfeDoF2} is given by
\[
(\psi_{i,1},\psi_{i,2})
=(\phi_{i,i-1},\phi_{i,i+1})\, M_i^{\intercal},
\qquad i=0,1,2.
\]
Hence it is enough to construct bases for $U_3(T)$ dual to \eqref{HCTfeDoF1}, \eqref{HCTfeDoFpt} and \eqref{HCTfeDoFpn}, and for $U_2(T)$ dual to \eqref{HCTfeDoF1} and \eqref{HCTfeDoFpt}.

\subsubsection{Explicit dual bases for $U_3(T)$}

For $U_3(T)$, let $\{w_i\}_{i=0}^2$ be the basis funciton defined in \eqref{eq:wbasis}. Then they are dual to \eqref{HCTfeDoFpn}. 
For $i=0,1,2$, with indices understood modulo $3$, define
\[
\phi_{i,i+1}^1 := \lambda_i^2\lambda_{i+1} - 2c_{i,i-1}\,w_{i-1},
\qquad
\phi_{i,i-1}^1 := \lambda_i^2\lambda_{i-1} - 2c_{i,i+1}\,w_{i+1}.
\]
These functions are dual to the directional derivative DoFs \eqref{HCTfeDoFpt}. The dual basis functions associated with the vertex value DoFs are
\[
\phi_i^0 := \lambda_i^3 + 3\phi_{i,i+1}^1 + 3\phi_{i,i-1}^1,
\qquad i=0,1,2.
\]

\subsubsection{Explicit dual bases for $U_2(T)$}

By the definition of $u_i$, cf.~\eqref{eq:u0def}, for $i=0,1,2$, with indices understood modulo $3$,
\[
\partial_{\boldsymbol t_{i+1,i-1}} u_i(\mathtt v_{i+1})=1,
\qquad
\partial_{\boldsymbol t_{i-1,i+1}} u_i(\mathtt v_{i-1})=-1,
\]
and
\[
\partial_{\boldsymbol t_{i+1,i-1}} (\lambda_{i+1}\lambda_{i-1})(\mathtt v_{i+1})=1,
\qquad
\partial_{\boldsymbol t_{i-1,i+1}} (\lambda_{i+1}\lambda_{i-1})(\mathtt v_{i-1})=1.
\]
Hence the basis functions dual to the directional derivative DoFs are
\[
\phi_{i,i+1}^1 := \tfrac12(\lambda_i\lambda_{i+1} + u_{i-1}),
\qquad
\phi_{i,i-1}^1 := \tfrac12(\lambda_i\lambda_{i-1} - u_{i+1}),
\qquad i=0,1,2.
\]
The basis functions dual to the vertex value DoFs are
\[
\phi_i^0 := \lambda_i^2 + 2\phi_{i,i+1}^1 + 2\phi_{i,i-1}^1,
\qquad i=0,1,2.
\]

\subsection{Relation to virtual element}
Define the local shape-function space of the $C^{1}$ virtual element of degree $k+2$ for $k\ge 1$ on an element $T$ by (cf.~\cite{ChenHuangWei2022,BrezziMarini2013,BeiraodaVeigaManzini2014})
\begin{align*}
V_{k+2}^{\rm VEM}(T):=\{v\in H^2(T)&: v|_{\partial T}\in C(\partial T),\ \nabla v|_{\partial T}\in C(\partial T;\mathbb R^2),\ \Delta^2v\in \mathbb P_{k-2}(T), \\
&\;\;\; v|_e\in\mathbb P_{k+2}(e),\ \partial_nv|_e\in\mathbb P_{k+1}(e)\ \textrm{ for } e\in\Delta_1(T) \}.
\end{align*}
Clearly, $\mathbb P_{k+2}(T)\subseteq V_{k+2}^{\rm VEM}(T)$. The space $V_{k+2}^{\rm VEM}(T)$ is uniquely determined by the DoFs \eqref{HCTfeDoF}, with \eqref{HCTfeDoF5} replaced by
\begin{equation}\label{C1veDoF5}
\int_T v\, q \,\dd x,
\quad q \in \mathbb P_{k-2}(T),
\end{equation}
that is, we use $\mathbb P_{k-2}(T)$ instead of $\mathbb P_{k-4}(T)$ in \eqref{HCTfeDoF5}.

For $k=1$, one has $\dim V_{3}^{\rm VEM}(T)=\dim U_{3}(T)$. For $k=0$, $\dim U_{2}(T)$ matches the dimension of the constrained virtual space
\begin{equation*}
\{v\in V_{3}^{\rm VEM}(T): \partial_n v|_e\in\mathbb P_{1}(e)\ \textrm{ for } e\in\Delta_1(T)\}.
\end{equation*}
For $k\ge 2$, however, $\dim V_{k+2}^{\rm VEM}(T)>\dim U_{k+2}(T)$. Computing the $L^2$ projection onto $\mathbb P_{k+2}(T)$ requires more interior moment DoFs.

A serendipity reduction \cite{AhmadAlsaediBrezziMariniEtAl2013} can be applied to $V_{k+2}^{\rm VEM}(T)$ by replacing the internal moments \eqref{C1veDoF5} with the reduced set \eqref{HCTfeDoF5}. After this reduction, the resulting $C^{1}$ VEM space and $U_{k+2}(T)$ have the same DoFs.

The two spaces are still different in practice. The reduced $C^{1}$ VEM space contains virtual functions that are not available in closed form and are accessed only through their DoFs, so a stabilization term is usually needed. In contrast, every function in $U_{k+2}(T)$ has an explicit representation and can be computed directly.

An advantage of $V_{k+2}^{\rm VEM}(T)$ is that it is defined on general polygons, not only on triangles. By contrast, $U_{k+2}(T)$ relies on the geometric structure induced by the barycentric refinement.

\section{Finite element elasticity complex in two dimensions}\label{sec:feelascomplex}
In this section, we assemble the finite element spaces into a two-dimensional elasticity complex. The lowest-order case is shown in Fig.~\ref{fig:lowest}.

Let $I_{k,h}^{\div}: H^1(\Omega; \mathbb S) \to \Sigma_{k,h}$ denote the nodal interpolation defined by the DoFs \eqref{DoF}, and let $I_{k+2,h}^{H^2}: H^3(\Omega)\to U_{k+2,h}$ denote the nodal interpolation defined by the DoFs \eqref{HCTfeDoF}. In general, a nodal interpolation $I_h u$ is defined by the condition
$$
N(I_hu)=N(u)\qquad\text{ for all DoFs }N.
$$

\begin{lemma}
The interpolation operators satisfy
\begin{equation}\label{eq:commuprop1}
\div(I_{k,h}^{\div}\boldsymbol{\tau}) = Q_{k-1,h}(\div\boldsymbol{\tau}),\quad\forall\,\boldsymbol{\tau}\in H^1(\Omega;\mathbb S),
\end{equation}
and
\begin{equation}\label{eq:commuprop2}
I_{k,h}^{\div}(Jv) = J(I_{k+2,h}^{H^2}v),\quad\forall\,v\in H^3(\Omega).
\end{equation}
\end{lemma}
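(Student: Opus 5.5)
Both identities are proved elementwise. In each case I show that the difference of the two sides lies in the relevant local space and that all of its degrees of freedom, or all of its moments, vanish; unisolvence then forces the difference to be zero.

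\emph{Proof of \eqref{eq:commuprop1}.} Fix $T\in\mathcal T_h$ and let $\boldsymbol\sigma=I_{k,h}^{\div}\boldsymbol\tau$. First I check that $\div\boldsymbol\sigma|_T\in\mathbb P_{k-1}(T;\mathbb R^2)$. This holds because $\div\boldsymbol\psi_i=0$ by Lemma~\ref{lem:characterization2d}, and $\div$ maps $\mathbb P_k(T;\mathbb S)$ into $\mathbb P_{k-1}(T;\mathbb R^2)$. Next, for any $\boldsymbol q\in\mathbb P_{k-1}(T;\mathbb R^2)$ I integrate by parts:
\[
\int_T\div(\boldsymbol\sigma-\boldsymbol\tau)\cdot\boldsymbol q\dd x=-\int_T(\boldsymbol\sigma-\boldsymbol\tau):\sym\nabla\boldsymbol q\dd x+\int_{\partial T}(\boldsymbol\sigma-\boldsymbol\tau)\boldsymbol n\cdot\boldsymbol q\dd s .
\]
This is valid because $\boldsymbol\sigma\in H(\div,T;\mathbb S)$; the piecewise structure causes no trouble, since the normal traces match across the interior edges. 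Here $\sym\nabla\boldsymbol q\in\mathbb P_{k-2}(T;\mathbb S)$ and $\boldsymbol q|_e\in\mathbb P_{k-1}(e;\mathbb R^2)\subset\mathbb P_k(e;\mathbb R^2)$. By \eqref{eq:DoF2} and \eqref{eq:DoF1} both terms vanish. Hence $\div\boldsymbol\sigma|_T$ is the $L^2$ projection of $\div\boldsymbol\tau$ onto $\mathbb P_{k-1}(T;\mathbb R^2)$, which gives \eqref{eq:commuprop1}.

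\emph{Proof of \eqref{eq:commuprop2}.} Set $u=I_{k+2,h}^{H^2}v$. Since $U_{k+2}(T)=\mathbb P_{k+2}(T)\oplus\operatorname{span}\{v_0,v_1,v_2\}$ and $J(v_i)=\boldsymbol\psi_i$ by Lemma~\ref{lem:characterization2d}, we have $J(u)|_T\in\Sigma_{k,\psi}^{\div}(T;\mathbb S)$. Because $u\in C^1(\Omega)$, $J(u)$ also has continuous normal traces, so $J(u)\in\Sigma_{k,h}$. By Theorem~\ref{thm:unisolv_sigma_kpsi} it therefore suffices to show that the DoFs \eqref{DoF} of $J(u-v)$ vanish. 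Put $w=u-v$; by construction all DoFs \eqref{HCTfeDoF} of $w$ vanish.

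For the edge DoFs I use the identity $J(w)\boldsymbol n=\partial_t(\nabla^\perp w)$ along $e$, up to sign convention; this says that the normal trace of the Airy stress is the tangential derivative of the rotated gradient. Then
\[
\int_e J(w)\boldsymbol n\cdot\boldsymbol q\dd s=\bigl[\nabla^\perp w\cdot\boldsymbol q\bigr]_{\partial e}-\int_e\nabla^\perp w\cdot\partial_t\boldsymbol q\dd s .
\]
The endpoint term vanishes by \eqref{HCTfeDoF2}. Write $\nabla w=\partial_t w\,\boldsymbol t+\partial_n w\,\boldsymbol n$, with $\partial_t\boldsymbol q\in\mathbb P_{k-1}(e;\mathbb R^2)$. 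The $\partial_n w$ part is killed by \eqref{HCTfeDoF4}. For the $\partial_t w$ part I integrate by parts once more; this produces endpoint values of $w$, which vanish by \eqref{HCTfeDoF1}, and moments $\int_e w\,\partial_t^2(\cdot)$ against $\mathbb P_{k-2}(e)$, which vanish by \eqref{HCTfeDoF3}.

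For the interior DoFs, with $\boldsymbol\tau\in\mathbb P_{k-2}(T;\mathbb S)$, I integrate by parts twice in $\int_T J(w):\boldsymbol\tau\dd x$. This is legitimate on all of $T$ because $w\in H^2(T)\cap C^1$. The result consists of
\begin{itemize}
\item $\int_T w\,\operatorname{rot}\operatorname{rot}\boldsymbol\tau\dd x$, where $\operatorname{rot}\operatorname{rot}\boldsymbol\tau\in\mathbb P_{k-4}(T)$; this vanishes by \eqref{HCTfeDoF5};
\item boundary terms of the form $\int_e\partial_n w\,(\cdot)$ with weights in $\mathbb P_{k-2}(e)$;
\item terms $\int_e w\,(\cdot)$ with weights in $\mathbb P_{k-3}(e)$;
\item vertex point terms involving $w$.
\end{itemize}
The boundary, edge and vertex terms are all killed by \eqref{HCTfeDoF4}, \eqref{HCTfeDoF3} and \eqref{HCTfeDoF1}.

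The main obstacle is the bookkeeping of the boundary and vertex terms in this double integration by parts. To keep it manageable, I would first rewrite $\int_T J(w):\boldsymbol\tau\dd x$ as $-\int_T\nabla^\perp w\cdot\div(\text{rotated }\boldsymbol\tau)\dd x$ plus an edge term, and then reuse the edge computation above.
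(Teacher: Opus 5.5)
Your proposal is correct and follows the paper's proof. The first identity comes from integrating $\div(I_{k,h}^{\div}\bs\tau)$ against $\mathbb P_{k-1}(T;\mathbb R^2)$ and moving the moments onto the stress DoFs. The second comes from showing that every stress DoF of $J(v-I_{k+2,h}^{H^2}v)$ vanishes and then invoking unisolvence. You also supply details the paper leaves implicit: $J(U_{k+2}(T))\subset\Sigma_{k,\psi}^{\div}(T;\mathbb S)$ because $J(v_i)=\bs\psi_i$, the vertex terms vanish, and the double integration by parts for the interior moments reduces to the DoFs as you describe.
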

\begin{proof}
For $\bs q\in \mathbb P_{k-1}(T;\mathbb R^2)$, integration by parts gives
$$
\begin{aligned}
\int_T \div(I_{k,h}^{\div}\boldsymbol{\tau})\cdot\bs q \,\dd x
&= - \int_T I_{k,h}^{\div}\boldsymbol{\tau} : \nabla \bs q \,\dd x + \int_{\partial T}(I_{k,h}^{\div}\boldsymbol{\tau}\bs n)\cdot \bs q \,\dd s\\
&= - \int_T \boldsymbol{\tau} : \nabla \bs q \,\dd x + \int_{\partial T}(\boldsymbol{\tau}\bs n)\cdot \bs q \,\dd s\\
&= \int_T \div\boldsymbol{\tau}\cdot\bs q \,\dd x.
\end{aligned}
$$
This proves \eqref{eq:commuprop1}.

For \eqref{eq:commuprop2}, let $e\in\Delta_1(T)$ and $\bs q\in \mathbb P_k(e;\mathbb R^2)$. Then
\begin{align*}
\int_e \bigl(\bigl(J(v-I_{k+2,h}^{H^2}v)\bigr)\bs n\bigr)\cdot \bs q \,\dd s
= -\int_e \bigl(\curl(v-I_{k+2,h}^{H^2}v)\bigr)\cdot \partial_t\bs q \,\dd s =0.
\end{align*}
Similarly, for any $\bs \tau \in \mathbb P_{k-2}(T; \mathbb S)$,
\begin{equation*}
\int_T J(v-I_{k+2,h}^{H^2}v): \bs \tau \,\dd x = 0.
\end{equation*}
Hence \eqref{eq:commuprop2} follows from the definition of $I_{k,h}^{\div}$.
\end{proof}

\begin{lemma}
\label{lem:infsup}
Let $k\ge 2$. Then
$$
\div \Sigma_{k,h} = V_{k-1,h},
$$
and
\[
\|\boldsymbol{v}\|_{0}
\lesssim
\sup_{\boldsymbol{\tau}\in\Sigma_{k,h}}
\frac{(\div\boldsymbol{\tau},\boldsymbol{v})}{\|\boldsymbol{\tau}\|_{H(\div)}},
\quad \forall\,\boldsymbol{v}\in V_{k-1,h}.
\]
\end{lemma}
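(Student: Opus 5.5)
The plan is a Fortin-type argument. I would construct a continuous right inverse of $\div$ on the continuous level and then compose it with the interpolation $I_{k,h}^{\div}$, using the commuting property \eqref{eq:commuprop1}. This gives both the surjectivity $\div\Sigma_{k,h}=V_{k-1,h}$ and the inf-sup bound at once.

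\textbf{Step 1 (continuous right inverse).} Given $\boldsymbol v\in V_{k-1,h}\subset L^2(\Omega;\mathbb R^2)$, I would invoke the standard result for the continuous elasticity complex \eqref{eq:2Dcomplex}. On a Lipschitz polygon there exists $\boldsymbol\tau\in H^1(\Omega;\mathbb S)$ with $\div\boldsymbol\tau=\boldsymbol v$ and $\|\boldsymbol\tau\|_1\lesssim\|\boldsymbol v\|_0$. One way to obtain it is to extend $\boldsymbol v$ by zero to a ball $B$ and solve the elasticity problem $-\div\,\sym\nabla\boldsymbol w=\boldsymbol v$ with homogeneous Dirichlet data on a convex domain (or on a smooth ball) containing $\Omega$. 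Elliptic regularity then gives $\boldsymbol w\in H^2$, and we set $\boldsymbol\tau=-\sym\nabla\boldsymbol w|_\Omega$. Alternatively, one applies a Bogovskii-type right inverse row-wise and symmetrizes through the Airy function.

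\textbf{Step 2 (interpolation and commuting).} Since $\boldsymbol\tau\in H^1(\Omega;\mathbb S)$, the interpolant $\boldsymbol\tau_h:=I_{k,h}^{\div}\boldsymbol\tau\in\Sigma_{k,h}$ is well defined; the edge moments are meaningful by the trace theorem. By \eqref{eq:commuprop1},
\[
\div\boldsymbol\tau_h=Q_{k-1,h}\div\boldsymbol\tau=Q_{k-1,h}\boldsymbol v=\boldsymbol v .
\]
This gives $V_{k-1,h}\subseteq\div\Sigma_{k,h}$. The reverse inclusion holds because on each macro triangle $\div\mathbb P_k(T;\mathbb S)\subset\mathbb P_{k-1}(T;\mathbb R^2)$ and $\div\boldsymbol\psi_i=0$ (Lemma~\ref{lem:characterization2d}). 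Hence $\div\Sigma_{k,h}=V_{k-1,h}$.

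\textbf{Step 3 (stability of the interpolant).} This is the main obstacle: we need $\|I_{k,h}^{\div}\boldsymbol\tau\|_0\lesssim\|\boldsymbol\tau\|_1$ uniformly in $h$. I would argue by scaling. The local space $\Sigma_{k,\psi}^{\div}(T;\mathbb S)$ and its DoFs \eqref{DoF} are affine-invariant: the $\boldsymbol\psi_i$ are defined by the barycentric coordinates $\lambda_i^{\rm R}$ and the edge vectors $\boldsymbol t_{c,j}$, which transform covariantly under affine maps, and the barycenter is mapped to the barycenter. Unisolvence (Theorem~\ref{thm:unisolv_sigma_kpsi}) then implies boundedness of the local interpolation on a reference element. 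The map of symmetric tensors is not simply a pullback, so I would use the Piola-type transform $\boldsymbol\sigma\mapsto B\hat{\boldsymbol\sigma}B^{\intercal}$, which preserves symmetry and normal-normal/normal-tangential moments up to scaling. Together with the trace inequality $\|\boldsymbol\tau\|_{0,e}^2\lesssim h_T^{-1}\|\boldsymbol\tau\|_{0,T}^2+h_T\|\nabla\boldsymbol\tau\|_{0,T}^2$ and shape regularity, this gives
\[
\|I_{k,h}^{\div}\boldsymbol\tau\|_{0,T}\lesssim\|\boldsymbol\tau\|_{0,T}+h_T|\boldsymbol\tau|_{1,T}.
\]
Summing over $T$, we obtain $\|\boldsymbol\tau_h\|_{H(\div)}\lesssim\|\boldsymbol\tau\|_1+\|\boldsymbol v\|_0\lesssim\|\boldsymbol v\|_0$.

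\textbf{Conclusion.} Taking $\boldsymbol\tau_h$ in the supremum gives
\[
\frac{(\div\boldsymbol\tau_h,\boldsymbol v)}{\|\boldsymbol\tau_h\|_{H(\div)}}=\frac{\|\boldsymbol v\|_0^2}{\|\boldsymbol\tau_h\|_{H(\div)}}\gtrsim\|\boldsymbol v\|_0,
\]
which is the inf-sup inequality. The restriction $k\ge2$ is needed only so that the interior DoFs \eqref{eq:DoF2} include $\mathbb P_{k-2}\supseteq\mathbb P_0$, and hence for the commuting identity to control the rigid-motion part, that is, the $\nabla\boldsymbol q$ moments with $\boldsymbol q\in\mathbb P_{k-1}$. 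This is exactly where the interior moments against $\sym\nabla\boldsymbol q\in\mathbb P_{k-2}(T;\mathbb S)$ enter in the proof of \eqref{eq:commuprop1}.
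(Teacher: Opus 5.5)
Your proof is correct and follows the paper's argument: an $H^1(\Omega;\mathbb S)$ right inverse of $\div$ (the paper simply cites \cite[Lemma 6]{JohnsonMercier1978}), then $I_{k,h}^{\div}$ with the commuting identity \eqref{eq:commuprop1}, and your reverse inclusion and affine-scaling bound $\|I_{k,h}^{\div}\boldsymbol\tau\|_{0,T}\lesssim\|\boldsymbol\tau\|_{0,T}+h_T|\boldsymbol\tau|_{1,T}$ fill in details the paper leaves implicit. Only your closing explanation of $k\ge2$ is wrong: for $k=1$ the test functions $\boldsymbol q\in\mathbb P_0(T;\mathbb R^2)$ have $\nabla\boldsymbol q=0$, so \eqref{eq:commuprop1} needs no interior moments, and the argument uses $k\ge2$ only through the well-posedness of $\Sigma_{k,h}$ and $I_{k,h}^{\div}$ (for $k=1$ one checks $v_0+v_1+v_2=\tfrac{C_T}{2}(\lambda_0-\lambda_1)(\lambda_1-\lambda_2)(\lambda_2-\lambda_0)$, a global cubic, so the enrichment by $\boldsymbol\psi_i^1=J(v_i)$ is not direct).
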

\begin{proof}
For any $\boldsymbol{v}\in V_{k-1,h}$, there exists $\boldsymbol{\tau}\in H^1(\Omega;\mathbb S)$ \cite[Lemma 6]{JohnsonMercier1978} such that
$$
\div \boldsymbol{\tau} = \boldsymbol{v},
\qquad
\|\boldsymbol{\tau}\|_{1}\lesssim \|\boldsymbol{v}\|_{0}.
$$
Let $\boldsymbol{\tau}_I := I_{k,h}^{\div}\boldsymbol{\tau}$. Then, by \eqref{eq:commuprop1},
\[
\div \boldsymbol{\tau}_I = \div \boldsymbol{\tau} = \boldsymbol{v},
\qquad
\|\boldsymbol{\tau}_I\|_{H(\div)}\lesssim \|\boldsymbol{v}\|_{0}.
\]
This proves both statements.
\end{proof}

The interpolation operators yield the following commuting diagram.

\begin{theorem}
\label{thm:femelasticitycomplexHCT}
Let $k\ge 2$. The following diagram
$$
\begin{array}{c}
\xymatrix{
\mathbb P_1 \ar[r]^-{\subset}
& H^3(\Omega) \ar[d]^{I_{k+2,h}^{H^2}} \ar[r]^-{J}
& H^1(\Omega;\mathbb S) \ar[d]^{I_{k,h}^{\div}} \ar[r]^-{\div}
& L^2(\Omega;\mathbb R^2) \ar[d]^{Q_{k-1,h}} \ar[r]
& 0
\\
\mathbb P_1 \ar[r]^-{\subset}
& U_{k+2,h} \ar[r]^-{J}
& \Sigma_{k,h} \ar[r]^-{\div}
& V_{k-1,h} \ar[r]
& 0
}
\end{array}
$$
commutes. In particular, when $\Omega$ is simply connected, the discrete elasticity complex
\begin{equation}
\label{eq:femelasticitycomplexHCT}
\mathbb P_1 \hookrightarrow U_{k+2,h}
\xrightarrow{J} \Sigma_{k,h}
\xrightarrow{\div} V_{k-1,h} \to 0
\end{equation}
is exact.
\end{theorem}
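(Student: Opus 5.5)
The commutativity of the diagram is already contained in the preceding lemma. The left square is \eqref{eq:commuprop2}, and the right square is \eqref{eq:commuprop1}. For the left-most square, $I_{k+2,h}^{H^2}$ reproduces $\mathbb P_1\subset U_{k+2,h}$, since the interpolation is a projection onto $U_{k+2,h}\supset\mathbb P_{k+2}$. So the content of the theorem is exactness of \eqref{eq:femelasticitycomplexHCT}, which I would prove in three steps, in order of position in the complex.

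\emph{Well-definedness and the two ends.} First I check the complex property. Since $U_{k+2,h}\subset C^1(\Omega)$ and each local function is piecewise polynomial on $T^{\rm R}$, we have $U_{k+2,h}\subset H^2(\Omega)$. On each $T$, $J(\mathbb P_{k+2}(T))\subset\mathbb P_k(T;\mathbb S)$, and Lemma~\ref{lem:characterization2d} gives $J(v_i)=\bs\psi_i$. Hence $J(v|_T)\in\Sigma^{\div}_{k,\psi}(T;\mathbb S)$, and since $J$ maps $H^2$ into $H(\div,\Omega;\mathbb S)$, the normal moments \eqref{eq:DoF1} are single-valued. Thus $JU_{k+2,h}\subset\Sigma_{k,h}$. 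Also $\div J=0$, and $\div\Sigma_{k,h}\subset V_{k-1,h}$ because $\div\bs\psi_i=0$. The kernel of $J$ on $U_{k+2,h}\subset H^2(\Omega)$ with $\Omega$ connected is exactly $\mathbb P_1$. Surjectivity of $\div$ onto $V_{k-1,h}$ is Lemma~\ref{lem:infsup}.

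\emph{Exactness at $\Sigma_{k,h}$.} Here I would use a dimension count rather than a direct construction. With $V,E,F$ denoting the numbers of vertices, edges and triangles, the local DoFs give
\[
\dim U_{k+2,h}=3V+(2k-1)E+\tfrac{(k-2)(k-3)}{2}F,\qquad
\dim\Sigma_{k,h}=2(k+1)E+\tfrac{k(k-1)}{2}\cdot 3F,
\]
and $\dim V_{k-1,h}=k(k+1)F$. Exactness at $\Sigma_{k,h}$ is equivalent to
\[
\dim\Sigma_{k,h}=\dim U_{k+2,h}-3+\dim V_{k-1,h},
\]
since $\dim J U_{k+2,h}=\dim U_{k+2,h}-3$ and $\dim\ker(\div)=\dim\Sigma_{k,h}-\dim V_{k-1,h}$. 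After substitution, the terms should reduce to $3(V-E+F)=3$ by Euler's formula for a simply connected polygon. This algebra is routine but must be verified carefully. The key coefficients are: the $E$-coefficient, $2k+2$ against $2k-1+3$; and the $F$-coefficient, $\tfrac{3k(k-1)}{2}$ against $\tfrac{(k-2)(k-3)}{2}+k(k+1)$, both of which equal $\tfrac{3k^2-3k}{2}$ once one notes $\tfrac{k^2-5k+6}{2}+k^2+k=\tfrac{3k^2-3k+6}{2}$. The extra $3F$ left over from the $F$-terms combines with $3V-3E$ to give $3(V-E+F)=3$.

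The main obstacle is getting this count exactly right. If any term fails to balance, I would instead prove inclusion directly. Given $\bs\sigma\in\Sigma_{k,h}$ with $\div\bs\sigma=0$, the continuous complex \eqref{eq:2Dcomplex} on simply connected $\Omega$ gives $v\in H^2(\Omega)$ with $Jv=\bs\sigma$. Since $\bs\sigma$ is piecewise polynomial, $v$ is piecewise $\mathbb P_{k+2}$ on the refined mesh, and one would then need to show $v|_T\in U_{k+2}(T)$. This is a harder local characterization: $U_{k+2}(T)$ would have to equal the subspace of the $C^1$ piecewise $\mathbb P_{k+2}$ functions on $T^{\rm R}$ whose $J$ lies in $\Sigma^{\div}_{k,\psi}$. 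That identity itself reduces to the local dimension count $\dim U_{k+2}(T)-3=\dim\Sigma^{\div}_{k,\psi}(T;\mathbb S)-\dim\mathbb P_{k-1}(T;\mathbb R^2)$ together with local surjectivity of $\div$. Either way, the dimension identity is the crux.
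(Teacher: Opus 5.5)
Your proof is correct, but for exactness at $\Sigma_{k,h}$ it takes a different route from the paper.

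\textbf{The paper's route.} The paper argues structurally. Commutativity comes from \eqref{eq:commuprop1}--\eqref{eq:commuprop2}, and together with exactness of the continuous complex this gives discrete exactness by a diagram chase. A divergence-free $\bs\sigma_h\in\Sigma_{k,h}$ has an Airy potential $v$, and then $\bs\sigma_h=I_{k,h}^{\div}\bs\sigma_h=I_{k,h}^{\div}Jv=J(I_{k+2,h}^{H^2}v)$. Surjectivity of $\div$ comes from the same mechanism, as in Lemma~\ref{lem:infsup}.

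\textbf{Your route.} You replace the chase by a count, and the count checks out. Your global dimensions are right, and $\dim U_{k+2,h}-3+\dim V_{k-1,h}-\dim\Sigma_{k,h}=3(V-E+F-1)$, which vanishes by Euler's formula. There is one small slip: you say both $F$-coefficients equal $\tfrac{3k^2-3k}{2}$, but the second is $\tfrac{3k^2-3k}{2}+3$. You then account for the extra $3F$ correctly, so your fallback local characterization is unnecessary. The count rests on two facts, both true here by construction:
\begin{itemize}
\item global dimension equals the number of global DoFs, since the spaces are defined by single-valued DoFs and the local DoFs are unisolvent (Theorems~\ref{thm:unisolv_sigma_kpsi} and \ref{th:Uk+2});
\item $JU_{k+2,h}\subset\Sigma_{k,h}\cap\ker(\div)$, which you verified.
\end{itemize}

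\textbf{What each approach buys.} Your approach is self-contained. The paper's one-line argument silently applies $I_{k+2,h}^{H^2}$ and \eqref{eq:commuprop2} to a potential $v$ that is only in $H^2$, not $H^3$. To justify this one must note two things. First, $v$ is a $C^1$ piecewise $\mathbb P_{k+2}$ function on the refined mesh, so its DoFs exist and the edge integration by parts still holds. Second, $I_{k,h}^{\div}$ reproduces $\Sigma_{k,h}$. Your count needs neither. The paper's approach, in exchange, needs no global dimension formulas. Its commuting operators are also reused: they give the inf-sup stability and extend to the quasi-interpolation results that follow.
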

\begin{proof}
The commutativity follows from \eqref{eq:commuprop1}--\eqref{eq:commuprop2}. If $\Omega$ is simply connected, then the continuous elasticity complex in the top row is exact. Therefore the discrete complex \eqref{eq:femelasticitycomplexHCT} is exact.
\end{proof}


\begin{remark}\rm
We also obtain an RT-type elasticity complex. For $k\ge 2$,
\begin{equation}
\label{eq:femelasticitycomplexRT}
\mathbb P_1\hookrightarrow U_{k+3^{-},h} \xrightarrow{J}
\Sigma_{k+1^{-},h}
\xrightarrow{\div} V_{k,h}\to 0,
\end{equation}
where
\begin{align*}
U_{k+3^{-},h} &= U_{k+2,h}\oplus \Oplus_{T\in\mathcal{T}_h}b_T^2\,\mathbb H_{k-3}(T) \\
&= \Bigl(U_{k+2,h}\backslash
\Oplus_{T\in\mathcal{T}_h}\{b_T^2\,\mathbb P_{k-4}(T)\}\Bigr)
\cup \Oplus_{T\in\mathcal{T}_h}\{b_T^2\,\mathbb P_{k-3}(T)\}.
\end{align*}
That is, $U_{k+3^{-},h}$ is obtained from $U_{k+2,h}$ by enriching the $H_0^2$ bubble polynomial space.
$\square$ \end{remark}

For the Hilbert complex \eqref{eq:2Dcomplex}, nodal interpolation is not suitable because it requires more smoothness. For $v\in H^2(\Omega)$, the point value of $\nabla v$ at a vertex is not well defined. For $\bs \sigma \in H(\div,\Omega;\mathbb S)$, one has $\bs \sigma \bs n\in H^{-1/2}(\partial\Omega)$, so edge integrals are not well defined in general. We therefore use quasi-interpolation.

Following the average projection technique in \cite{BrennerSung2005,HuangHuang2011}, we define $\widetilde{I}_{k+2,h}^{H^2}: H^2(\Omega)\to U_{k+2,h}$ by the DoFs \eqref{HCTfeDoF}, with \eqref{HCTfeDoF2} replaced by
$$
\nabla (\widetilde{I}_{k+2,h}^{H^2}v)(\mathtt v)
=
\frac{1}{|\mathcal{T}_{\mathtt v}|}
\sum_{T\in\mathcal{T}_{\mathtt v}}
\nabla (I_{h}^{L}v)|_T(\mathtt v),
\qquad
\mathtt v \in \Delta_0(T),
$$
where $\mathcal{T}_{\mathtt v}$ is the set of triangles sharing $\mathtt v$, and $I_{h}^{L}$ is the nodal interpolation onto the Lagrange space of degree $k+2$.

We next define $\widetilde{I}_{k,h}^{\div}: H(\div,\Omega;\mathbb S)\to \Sigma_{k,h}$ following \cite[Section 4.2]{Huang2023}. By the regular decomposition
$$
H(\div,\Omega;\mathbb S)
=
H^1(\Omega;\mathbb S)
+
J\bigl(H^2(\Omega)\bigr),
$$
any $\boldsymbol{\tau}\in H(\div,\Omega;\mathbb S)$ admits a decomposition $\boldsymbol{\tau}=\boldsymbol{\tau}_1+J(w)$ with $\boldsymbol{\tau}_1\in H^1(\Omega;\mathbb S)$ and $w\in H^2(\Omega)$. We define
$$
\widetilde{I}_{k,h}^{\div}\boldsymbol{\tau}
:=
I_{k,h}^{\div}\boldsymbol{\tau}_1
+
J\bigl(\widetilde{I}_{k+2,h}^{H^2}w\bigr).
$$
Then
$$
\div\bigl(\widetilde{I}_{k,h}^{\div}\boldsymbol{\tau}\bigr)
=
Q_{k-1,h}\bigl(\div\boldsymbol{\tau}\bigr),
\qquad
\forall\,\boldsymbol{\tau}\in H(\div,\Omega;\mathbb S),
$$
and
$$
\widetilde{I}_{k,h}^{\div}(Jv)
=
J\bigl(\widetilde{I}_{k+2,h}^{H^2}v\bigr),
\qquad
\forall\,v\in H^2(\Omega).
$$
The operator $\widetilde{I}_{k,h}^{\div}$ depends on a global decomposition of $\boldsymbol{\tau}$ and is therefore nonlocal.

The next theorem gives a commuting quasi-interpolation for the Hilbert complex \eqref{eq:2Dcomplex}.

\begin{theorem}
\label{thm:commuting-quasi-interpolation}
Let $k\ge 2$ and 
the quasi-interpolation operators
\[
\widetilde{I}_{k+2,h}^{H^2}: H^2(\Omega)\to U_{k+2,h},
\qquad
\widetilde{I}_{k,h}^{\div}: H(\div,\Omega;\mathbb S)\to \Sigma_{k,h},
\]
are defined above. Then the following diagram
$$
\begin{array}{c}
\xymatrix{
\mathbb P_1 \ar[r]^-{\subset}
& H^2(\Omega) \ar[d]^{\widetilde{I}_{k+2,h}^{H^2}} \ar[r]^-{J}
& H(\div,\Omega;\mathbb S) \ar[d]^{\widetilde{I}_{k,h}^{\div}} \ar[r]^-{\div}
& L^2(\Omega;\mathbb R^2) \ar[d]^{Q_{k-1,h}} \ar[r]
& 0
\\
\mathbb P_1 \ar[r]^-{\subset}
& U_{k+2,h} \ar[r]^-{J}
& \Sigma_{k,h} \ar[r]^-{\div}
& V_{k-1,h} \ar[r]
& 0
}
\end{array}
$$
commutes. Equivalently,
\begin{align}
\widetilde{I}_{k,h}^{\div}(Jv)
&=
J\bigl(\widetilde{I}_{k+2,h}^{H^2}v\bigr),
\qquad &&\forall\, v\in H^2(\Omega), \label{eq:quasi-commute-J}
\\
\div\bigl(\widetilde{I}_{k,h}^{\div}\boldsymbol{\tau}\bigr)
&=
Q_{k-1,h}\bigl(\div\boldsymbol{\tau}\bigr),
\qquad &&\forall\, \boldsymbol{\tau}\in H(\div,\Omega;\mathbb S). \label{eq:quasi-commute-div}
\end{align}
\end{theorem}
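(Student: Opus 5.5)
The plan is to verify the two identities \eqref{eq:quasi-commute-J} and \eqref{eq:quasi-commute-div} directly from the definitions. Commutativity of the diagram is then immediate, because the two rows are complexes: $\div J=0$, and $J$ annihilates $\mathbb P_1$.

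For \eqref{eq:quasi-commute-div}, fix $\boldsymbol\tau=\boldsymbol\tau_1+J(w)$ with $\boldsymbol\tau_1\in H^1(\Omega;\mathbb S)$ and $w\in H^2(\Omega)$. By definition,
$$\div\widetilde I_{k,h}^{\div}\boldsymbol\tau=\div I_{k,h}^{\div}\boldsymbol\tau_1+\div J(\widetilde I_{k+2,h}^{H^2}w).$$
The second term vanishes. It does so in the global sense, not just elementwise: $\widetilde I_{k+2,h}^{H^2}w\in U_{k+2,h}\subset C^1(\Omega)$ is piecewise smooth, hence in $H^2(\Omega)$, so its Airy image lies in $H(\div,\Omega;\mathbb S)$ with zero divergence. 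The first term equals $Q_{k-1,h}\div\boldsymbol\tau_1$ by \eqref{eq:commuprop1}. Since $\div\boldsymbol\tau_1=\div\boldsymbol\tau$, the identity follows.

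For \eqref{eq:quasi-commute-J}, take $v\in H^2(\Omega)$ and $\boldsymbol\tau=J(v)$. The main obstacle is that $\widetilde I_{k,h}^{\div}$ is defined through a chosen regular decomposition, which is not unique. The first thing to establish is therefore that the operator is well defined. Suppose $\boldsymbol\tau_1+J(w)=\boldsymbol\tau_1'+J(w')$. Then $J(w-w')=\boldsymbol\tau_1'-\boldsymbol\tau_1\in H^1(\Omega;\mathbb S)$, so $z:=w-w'\in H^2$ satisfies $\nabla^2 z\in H^1$. Modulo $\mathbb P_1$ this gives $z\in H^3(\Omega)$, locally and hence on the Lipschitz polygon via the regularity of the Hessian.

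I then need
$$I_{k,h}^{\div}J(z)=J(\widetilde I_{k+2,h}^{H^2}z),$$
since this makes the two candidate definitions agree. By \eqref{eq:commuprop2}, the left side equals $J(I_{k+2,h}^{H^2}z)$, so it suffices to show
$$J\bigl(I_{k+2,h}^{H^2}z-\widetilde I_{k+2,h}^{H^2}z\bigr)=0.$$
I expect this to be the genuinely delicate point. The two interpolants differ only in the vertex gradient DoFs, where $\nabla z(\mathtt v)$ is replaced by the average of $\nabla(I_h^L z)|_T(\mathtt v)$. For $z\in H^3$ these need not coincide.

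I would therefore resolve well-definedness by fixing the convention used in \cite[Section 4.2]{Huang2023}: a specific bounded linear regular-decomposition operator is chosen, so $\widetilde I_{k,h}^{\div}$ is a well-defined linear map. With the decomposition fixed, $J(v)$ for $v\in H^2$ is decomposed as $\boldsymbol\tau_1=0$, $w=v$. This is consistent, provided the chosen decomposition operator reproduces $J$-images; I would build that property into the convention, as Huang does by splitting off the $J$-part first. With that choice,
$$\widetilde I_{k,h}^{\div}(Jv)=I_{k,h}^{\div}0+J(\widetilde I_{k+2,h}^{H^2}v)=J(\widetilde I_{k+2,h}^{H^2}v),$$
which is \eqref{eq:quasi-commute-J}.

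Finally, I would note that the leftmost square commutes: $\widetilde I_{k+2,h}^{H^2}$ reproduces $\mathbb P_1$, because $I_h^L$ is exact on $\mathbb P_1$ and the averaged gradients of a linear function equal its gradient.
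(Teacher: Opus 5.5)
Your argument is essentially the paper's. The paper gives no separate proof beyond the remarks after the definition of $\widetilde{I}_{k,h}^{\div}$: \eqref{eq:commuprop1} together with $\div J=0$ gives \eqref{eq:quasi-commute-div}, and decomposing $J(v)$ with $\boldsymbol\tau_1=0$, $w=v$ gives \eqref{eq:quasi-commute-J}.

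Your well-definedness concern is genuine and the paper glosses over it: for $v\in H^3(\Omega)$ the admissible choice $\boldsymbol\tau_1=J(v)$, $w=0$ would give $J(I_{k+2,h}^{H^2}v)$ instead. The clean convention is $\boldsymbol\tau_1:=R\,\div\boldsymbol\tau$ and $J(w):=\boldsymbol\tau-\boldsymbol\tau_1$, with $R$ a bounded linear right inverse of $\div$ into $H^1(\Omega;\mathbb S)$. That is, split off the $H^1$ part through the divergence first, not the $J$-part as you describe; an $L^2$-projection onto divergence-free fields need not leave an $H^1$ remainder on nonconvex polygons. This convention gives $\boldsymbol\tau_1=0$ for every divergence-free $\boldsymbol\tau$, and the remaining $\mathbb P_1$ ambiguity in $w$ is harmless because $\widetilde I_{k+2,h}^{H^2}$ reproduces $\mathbb P_1$.
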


\bibliographystyle{abbrv}
\bibliography{references,./paper}


\end{document}